    \let\Cref\crtCref
    \let\cref\crtcref
\providecommand{\U}[1]{\protect\rule{.1in}{.1in}}
\crefname{equation}{}{}
\Crefname{equation}{Equation}{Equations}
\crefname{proposition}{Proposition}{Propositions}
\Crefname{proposition}{Proposition}{Propositions}
\crefname{condition}{Condition}{Conditions}
\crefname{conjecture}{Conjecture}{Conjectures}
\newtheorem{theorem}{Theorem}
\newtheorem{algorithm}[theorem]{Algorithm}
\newtheorem{assumption}[theorem]{Assumption}
\newtheorem{definition}[theorem]{Definition}
\newtheorem{example}[theorem]{Example}
\newtheorem{lemma}[theorem]{Lemma}
\newtheorem{notation}[theorem]{Notation}
\newtheorem{problem}[theorem]{Problem}
\newtheorem{proposition}[theorem]{Proposition}
\newtheorem{remark}[theorem]{Remark}
\newcommand{\RR}{\mathbb{R}}
\begin{document}

\title{Certificate for Orthogonal Equivalence of Real Polynomials\\ by Polynomial-Weighted Principal
Component Analysis\thanks{Equal contribution by all the authors. The authors are ordered alphabetically. }}

\author{Martin Helmer \\
Department of Mathematics, Swansea University  \\
Swansea, SA1 8EN Wales, UK\\
martin.helmer@swansea.ac.uk
\\[10pt]
David Hong  \\
Department of Electrical and Computer Engineering, University of Delaware \\
Newark, DE 19716, US \\
hong@udel.edu
\\[10pt]
Hoon Hong  \\
Department of Mathematics, North Carolina State University \\
Raleigh, NC 27695, US\\
hong@ncsu.edu
}
\maketitle
\begin{abstract}
    Suppose that $f(x)\in \RR[x_1,\dots, x_n]$ and $g(x)\in \RR[x_1,\dots, x_n]$ are two real polynomials of degree $d$ in~$n$ variables. If the polynomials $f$ and $g$ are the same up to orthogonal symmetry a natural question is then what element of the orthogonal group induces the orthogonal symmetry; i.e.~to find the element $R\in O(n)$ such that $f(Rx)=g(x)$. One may directly solve this problem by constructing a nonlinear system of equations induced by the relation $f(Rx)=g(x)$ along with the identities of the orthogonal group however this approach becomes quite computationally expensive for larger values of $n$ and $d$. To give an alternative and significantly more scalable solution to this problem, we introduce the concept of {\em Polynomial-Weighted Principal Component Analysis} (PW-PCA). We in particular show how PW-PCA can be effectively computed and how these techniques can be used to obtain a {\em certificate of orthogonal equivalence}, that is we find the $R\in O(n)$ such that $f(Rx)=g(x)$.
\end{abstract}

\section{Introduction}
Understanding the behavior of polynomials under group actions has been a topic of classical interest in algebra. A well developed and wide ranging framework which provides insight into this, and in particular answers if two polynomials lie in the same orbit under a group action, is provided by  Invariant Theory and is well described in a variety of books, e.g.~\cite{dolgachev2003lectures,neusel2007invariant,springer2006invariant, derksen2015computational,popov1994invariant}. Consider a group $G$ acting on the vector space~$V$ of polynomials of degree $d$ in $n$ variables with coefficients in a field $k$. From a computational perspective algorithms exist which allow us to decide if two polynomials $f,g$ are in the same $G$-orbit. Roughly speaking these algorithms compute various properties of the ring of invariants $k[V]^G$, see e.g.~\cite{derksen2015computational,sturmfels2008algorithms,kogan2023invariants}, and may be applied for many different groups $G$. While many of the algorithmic approaches focus on the case where~$G$ is a finite group, infinite groups are also considered, for example when $G$ is a linear algebraic  group, see e.g.~\cite{derksen2015computational} for a detailed discussion and \cite{InvariantRingArticle,InvariantRingSource} for a computer implementation. A complete set of invariants is one which generates the ring of  invariants $k[V]^G$. In the context of testing if two polynomials lay in the same $G$-orbit the variables in the ring $k[V]^G$ represent the coefficients $c$ of a polynomial $f$ in $V$ and a given invariant specified by $r(c)\in k[V]^G$ must have the same value when evaluated at the explicit coefficients of any two polynomials in $V$ in the same $G$-orbit.
Numerous results in Invariant Theory also exist which give closed form formulas for the invariants associated to particular groups. In the case of the orthogonal group this has been studied by several authors, see e.g.~\cite{gorlach2019rational,hubert2025algebraically} and the references therein; the current state of the art for the case of the orthogonal group $O(n)$ seems to be the recent work \cite{breloer2025rational} which gives a simple closed form expression for the rational invariants of the action of the orthogonal group on the vector space of polynomials of degree $d$ in $n$ variables for $d$ even (with the possibility of extension to the odd case). 

Let $R\bullet f$ denote the action of a group element $R$ on a polynomial $f$. If we have two polynomials~$f$ and~$g$ which are equivalent under a group action a natural follow up question, which is not directly addressed by methods of Invariant Theory, is how do we obtain a {\em certificate of equivalence}, that is how do we find an element $R\in G$ such that $R\bullet f=g$? In the present work we focus on a specialized method which provides such a certificate  for the case where $G$ is the orthogonal group $O(n)$.

Consider the problem of obtaining a certificate of equivalence for the action of the orthogonal group $G=O(n)$. Given fixed polynomials~$f$ and~$g$, both of degree $d$, in $\RR[x_1,\dots, x_n]$ one may consider a matrix $R$ in $n^2$ variables $r_{11}, \dots ,r_{nn}$ and, adding in the constraints $R^\top R=I_n$ which define the orthogonal group, may directly compute solutions (if any) to the system in variables $r_{11}, \dots ,r_{nn}$ arising by equating coefficients in the relation $R\bullet f=f(Rx)=g(x)$, see Algorithm~\ref{alg:baseline} in Section \ref{sec:performance}. When $n=1$ and for any $d$ or when $d=1$ and for any $n$ the system of equations can be solved very simply. Similarly, several other special cases for small $n$ or $d$ may be solved fairly easily. For example, when $d=2$, there exists simple known solutions in terms of the eigenvectors of the highest degree homogeneous parts (the quadratic form part) of $f$ and $g$. In general, however, solving the resulting non-linear system of equations quickly becomes challenging for large~$n$ and~$d$.

In this paper we propose a novel solution to this problem of certifying orthogonal equivalence of polynomials which combines ideas from algebra and statistics to yield an effective computational method which is far more scalable than the direct solution approach mentioned above.
Inspired by the widely used data analysis technique Principal Component Analysis (PCA), we will introduce a technique which we refer to as {\em Polynomial-Weighted Principal Component Analysis} (PW-PCA).

PW-PCA will allow us to essentially reduce the problem for arbitrary degree polynomials to that for the quadratic homogeneous polynomials.
We then employ this method to develop an algorithm to obtain a certificate of orthogonal equivalence for two polynomials.

Interestingly, the problem of obtaining a certificate of orthogonal equivalence has a natural relation to the so called {\em orthogonal Procrustes problem}, which has been heavily studied, e.g., in the context of aligning data points.
However, there are crucial differences between these problems that obstruct a straightforward extension of well-known techniques for the orthogonal Procrustes problem to our setting; see Remark \ref{remark:Procrustes} below.
Our proposed method instead exploits intrinsic properties of the geometry of the polynomials.

As mentioned above, our main focus is on obtaining a certificate of orthogonal equivalence under the assumption that $f$ and $g$ are already known to be orthogonally equivalent.
However, as a byproduct of our approach employing PW-PCA to obtain such a certificate, 
we can also easily devise an algorithm which could, in principle, be used to test if the polynomials $f$ and $g$ are orthogonally equivalent when this is not known apriori. In practice however the obvious implementation on this method requires working in algebraic number fields, hence posing problems for practical efficiency, see Remark \ref{remark:OrthEqImplment}. 
Thus we leave it for future work.

\subsubsection*{Structure of the paper}The paper is structured as follows.
In Section~\ref{sec:problem}, we precisely state the main problem of this paper.
In Section~\ref{sec:pwpca}, 
we develop a notion of polynomial-weighted form of principal component analysis (PW-PCA) 
 and provide an algorithm for computing it.
In Section~\ref{sec:coe_pwpca},  under a generic assumption, we develop the mathematical theory and an algorithm to tackle the main problem
of this paper by utilizing PW-PCA.
In Section~\ref{sec:performance}, we discuss the performance of a preliminary implementation of the algorithm.
In Appendix \ref{appendix:A}, we prove  the genericity of the assumption from Section~\ref{sec:coe_pwpca}. 

\section{Problem}
\label{sec:problem}
We now give an exact statement of our problem. We begin by recalling the definition of the orthogonal group $O(n)$ as a subgroup of the general linear group, ${\rm GL}(n,\RR)$, the multiplicative group of $n\times n$ invertible matrices with real entries. We also define the associated group action of $O(n)$ on polynomials and the notion of equivalence of polynomials under this action.

\begin{definition}
[Orthogonal equivalence]\label{def:O(n)}\ \  

\begin{enumerate}
\item (\textsf{Group}) The $n$-dimensional \emph{orthogonal group}, denoted as
$O\left(  n\right)  $, is defined by
\begin{equation}
O(n)=\left\{  R\in\mathbb{R}^{n\times n}:R^{\top}R=I_{n}\right\}  .
\end{equation}

\item (\textsf{Action}) The \emph{action} of $R\in O\left(  n\right)  $ on
$p\in$ $\mathbb{R}[x_{1},\ldots,x_{n}]$, denoted as $R\bullet p$, is defined
by%
\[
(R\bullet p)(x)=p(Rx).
\]

\item (\textsf{Equivalence}) We say that  $f,g\in\mathbb{R}[x_{1}%
,\ldots,x_{n}]$ are \emph{orthogonally equivalent, }and write \emph{ }%
$f\sim_{O(n)}g$, if
\[
\exists_{R\in O(n)}\ \ g=R\bullet f.
\]
\end{enumerate}
\end{definition}

With this definition in hand we may precisely state the problem of obtaining an element of $O(n)$ which certifies the  equivalence two polynomials.  
\begin{problem}
[Certificate for orthogonal equivalence]\label{prb:problem}\ 
A solution to the problem of finding a certificate of orthogonal equivalence  is to output an element $R\in O(n)$ given input polynomials $f$ and $g$ in $n$ variables of degree $d$ as below. \medskip

{\bf Given:} $f,g\in\mathbb{R}[x_{1},\ldots,x_{n}]$ of degree $d$ such that
$n,d\geq2$ and $f\sim_{O(n)}g$.\medskip

{\bf Compute:} $R\in O(n)$ such that $g=R\bullet f$.
\end{problem}

\medskip

We now illustrate the problem with a simple example that we will use as a running example throughout the paper.

\begin{example}
[Simple running example]\label{ex:running}\ 
Consider the two polynomials $f$ and $g$ of degree $d=3$ in $\RR[x_1, x_2, x_3]$ given below:
\begin{align*}
f=&-27 x_{1}^{3}+27 x_{2}^{2} x_{3}-9 x_{3},\\
g=&-12 x_{1}^{3}+12 x_{1}^{2} x_{2}-12 x_{1}^{2} x_{3}+6 x_{1} x_{2}^{2}+36
x_{1} x_{2} x_{3} -33 x_{1} x_{3}^{2} \\ &+9 x_{2}^{3}-6 x_{2}^{2} x_{3}+6 x_{2}
x_{3}^{2}-6 x_{3}^{3}+3 x_{1}-6 x_{2}-6 x_{3}.
\end{align*}
These polynomials are orthogonally equivalent, i.e.~$f\sim_{O(n)}g$. A certificate $R\in O(n)$ verifying this equivalence is given by 

\begin{center}
$R\;=$
{\Large
$\left[
\renewcommand{\arraystretch}{1.3}
\begin{array}
[c]{rrr}%
\frac{2}{3} & -\frac{1}{3} & \frac{2}{3}\\
\frac{2}{3} & \frac{2}{3} & -\frac{1}{3}\\
-\frac{1}{3} & \frac{2}{3} & \frac{2}{3}
\end{array}
\right].$}
\end{center}
We may easily check that $R$ is an element of $O(n)$ and that $f(Rx)=g(x)$. 
Our goal in this paper is to develop an efficient and scalable algorithm, given in Algorithm \ref{alg:certificate} below, to compute the matrix $R$ above given input polynomials $f$ and $g$. 
\end{example}
\begin{remark}
In the above example (Example~\ref{ex:running}), the input polynomial coefficients and the output matrix entries lie in the same field, namely $\mathbb{Q}$. This is not always true.
For instance consider the following input.
\begin{align*}
 f &= 4 x_1^2 - 2 x_2^2 \\
 g &=   x_1^2 - 6 x_1 x_2 + x_2^2 
\end{align*} 
Then an output is given by 
$$
\left[
\begin{array}
[c]{rr}%
1/\sqrt{2} &  -1/\sqrt{2} \\
1/\sqrt{2} &  1/\sqrt{2}\\
\end{array}
\right] 
\;\;=\;\;
\left[
\begin{array}
[c]{rr}%
\cos(\pi/4) & -\sin(\pi/4) \\
\sin(\pi/4) & \cos(\pi/4)\\
\end{array}
\right],
$$
which corresponds to the rotation by $\pi/4$. Note that the input polynomial coefficients lie in $\mathbb{Q}$ but the output matrix entries lie in $\mathbb{Q}(\sqrt{2})$.
In general, the output matrix entries lie in an algebraic extension of the input coefficient field. Thus, any method that strives to solve the main problem (Problem~\ref{prb:problem}) exactly would encounter computations in and algebraic extension field, which severely limits the practicality of the method especially for large $n$ and $d$. Hence, one instead strives to develop methods that satisfy both of the following two requirements:
(1) the method can be executed exactly if needed  and
(2) the method can be safely executed approximately, i.e.,  the method is numerically stable.
\end{remark}

\begin{remark}[Relationship to symmetric tensors and the Procrustes problem]\label{remark:Procrustes}
    Note that \Cref{prb:problem} can also be thought of as a sort of orthogonal Procrustes problem on symmetric tensors. To see this we represent the degree-$d$ polynomials $f,g\in\mathbb{R}[x_{1},\ldots,x_{n}]$
    in terms of symmetric coefficient tensors as
    \begin{align*}
        f(x) &= F_0 + \langle F_1, x \rangle + \langle F_2, x \circ x \rangle + \cdots + \langle F_d, x^{\circ d} \rangle \\
        g(x) &= G_0 + \langle G_1, x \rangle + \langle G_2, x \circ x \rangle + \cdots + \langle G_d, x^{\circ d} \rangle
        ,
    \end{align*}
    where the scalars $F_0, G_0 \in \mathbb{R}$ represent the constant terms for $f$ and $g$, respectively,
    the vectors $F_1, G_1 \in \mathbb{R}^n$ represent their linear terms,
    the symmetric matrices $F_2, G_2 \in \mathbb{R}^{n \times n}$ represent their quadratic terms,
    and so on,
    $\circ$ denotes the usual tensor outer product,
    and $\langle \cdot, \cdot \rangle$ denotes the usual tensor inner product.
    It then follows by standard tensor algebra manipulations
    (see, e.g., \cite[Chapter 3]{ballard2025tdd}) that for any $R \in \mathbb{R}^{n \times n}$,
    we have
    \begin{align*}
        f(Rx)
        &
        =
        F_0 + \langle F_1, Rx \rangle + \langle F_2, (Rx) \circ (Rx) \rangle + \cdots + \langle F_d, (Rx)^{\circ d} \rangle
        \\&
        =
        F_0 + \langle F_1 \times_1 R^\top, x \rangle + \langle F_2 \times_1 R^\top \times_2 R^\top, x \circ x \rangle + \cdots + \langle F_d \times_1 R^\top \cdots \times_d R^\top, x^{\circ d} \rangle
        ,
    \end{align*}
    where $\times_i$ denotes the tensor-times-matrix product along mode $i$.
    Thus, we seek $R \in O(n)$ for which
    \begin{equation*}
        G_i = F_i \times_1 R^\top \cdots \times_i R^\top
        ,
        \quad
        \text{for each }
        i \in \{1,\dots,d\}
        ,
    \end{equation*}
    which can be formulated as the following least-squares optimization problem
    \begin{equation}
        \label{eq:procrustes:analogue}
        \operatorname*{argmin}_{R \in O(n)}
        \sum_{i=1}^d
        \| F_i \times_1 R^\top \cdots \times_i R^\top - G_i \|_{\mathsf{F}}^2
        ,
    \end{equation}
    where $\|\cdot\|_{\mathsf{F}}$ denotes the usual Frobenius norm for tensors.
    
    This closely resembles the well-studied orthogonal Procrustes problem \cite{schonemann1966gso},
    which seeks to find $R \in O(n)$ that minimizes $\| R A - B \|_{\mathsf{F}}^2$
    for given matrices $A$ and $B$.
    However, there are crucial differences.
    First off, the problem \cref{eq:procrustes:analogue} involves symmetric tensors of all orders up to $d$,
    rather than just matrices (i.e., order-2 tensors).
    Moreover, the orthogonal matrix $R$ is multiplied along all the modes of $F_i$.
    This means that even for the order-2 term in \cref{eq:procrustes:analogue},
    which seeks to transform the matrix $F_2 \in \mathbb{R}^{n \times n}$ into the matrix $G_2 \in \mathbb{R}^{n \times n}$,
    the transformation corresponds to $F_2 \times_1 R^\top \times_2 R^\top = R^\top F_2 R$
    rather than $R^\top F_2$ as in the orthogonal Procrustes problem.
    Hence, the usual method for solving the orthogonal Procrustes problem
    via polar factorization of $BA^\top$
    is not straightforward to extend to solve \cref{eq:procrustes:analogue}.
    In this paper, we develop a different approach that instead exploits intrinsic properties of the geometry of the polynomials.
\end{remark}

\section{Polynomial-Weighted Principal Component Analysis}
\label{sec:pwpca}

Taking inspiration from statistical multivariate analysis,
this section develops a polynomial-weighted form of principal component analysis
that we will use to tackle the problem of certifying orthogonal equivalence of polynomials stated in the previous section, Problem~\ref{prb:problem}.

\subsection{A brief review of Principal Component Analysis}
\label{sec:pca}

We begin with a brief review of principal component analysis (PCA)
\cite{hotelling1933aoa,pearson1901lol}
to set the stage for our proposed polynomial-weighted principal component analysis;
see, e.g., the monograph \cite{jolliffe2002pca} for a comprehensive survey.
Given a collection of $N$ data points $x^{(1)}, \dots, x^{(N)} \in \mathbb{R}^n$,
PCA obtains a set of \emph{principal axes} that maximally capture the variability
present in the data.
Since many datasets have underlying low-dimensional structure,
much of the meaningful variability
can often be captured with relatively few principal axes.
These principal axes are obtained
by finding directions
along which the variance of the data points is maximized.
Precisely put,
the first principal axis is obtained by maximizing%
\footnote{For simplicity,
we assume that the data points have already been centered
to have an average of $(x^{(1)} + \cdots + x^{(N)})/N = 0$;
this can always be straightforwardly accomplished
by subtracting the average from all the data points.}
\begin{equation}
    \label{eq:pca:dirvar}
    \operatorname{Var}_{u} (x^{(1)}, \dots, x^{(N)})
    =
    \frac{1}{N}
    \sum_{i=1}^N
    (u^\top x^{(i)})^2
    ,
\end{equation}
with respect to the unit direction vector $u \in \mathbb{R}^n$.
The second principal axis is then obtained
by maximizing $\operatorname{Var}_{u} (x^{(1)}, \dots, x^{(N)})$
with respect to unit direction vectors $u \in \mathbb{R}^n$
that are orthogonal to the first axis,
and so on.
This yields an ordered set of orthonormal principal axes $v_1, \dots, v_n \in \mathbb{R}^n$ as follows:
\begin{equation}
    \label{eq:pca:principalaxes}
    \begin{split}
    v_1
    &\in
    \operatorname*{argmax}_{u \in \mathbb{R}^{n} : \|u\| = 1}
    \operatorname{Var}_{u} (x^{(1)}, \dots, x^{(N)})
    , \\
    v_2
    &\in
    \operatorname*{argmax}_{u \in \mathbb{R}^{n} : \|u\| = 1}
    \operatorname{Var}_{u} (x^{(1)}, \dots, x^{(N)})
    \quad \text{s.t.} \quad
    u \perp v_1
    , \\
    &\;\;\vdots\\
    v_n
    &\in
    \operatorname*{argmax}_{u \in \mathbb{R}^{n} : \|u\| = 1}
    \operatorname{Var}_{u} (x^{(1)}, \dots, x^{(N)})
    \quad \text{s.t.} \quad
    u \perp \{v_1, \dots, v_{n-1}\}
    .
    \end{split}
\end{equation}
Notably,
the variance $\operatorname{Var}_{u} (x^{(1)}, \dots, x^{(N)})$ along the direction $u \in \mathbb{R}^n$
can be expressed as a quadratic form
\begin{equation*}
    \operatorname{Var}_{u} (x^{(1)}, \dots, x^{(N)})
    =
    \frac{1}{N}
    \sum_{i=1}^N
    (u^\top x^{(i)})
    ({x^{(i)}}^\top u)
    =
    u^\top
    \left(
    \frac{1}{N} \sum_{i=1}^N
    x^{(i)} {x^{(i)}}^\top
    \right)
    u
    =
    u^\top
    \operatorname{Cov}(x^{(1)}, \dots, x^{(N)})
    u
    ,
\end{equation*}
defined by the covariance matrix
\begin{equation}
    \label{eq:pca:cov}
    \operatorname{Cov}(x^{(1)}, \dots, x^{(N)})
    =
    \frac{1}{N} \sum_{i=1}^N
    x^{(i)} {x^{(i)}}^\top
    \in \mathbb{R}^{n \times n}
    .
\end{equation}
As a result,
the solutions to the sequence of optimization problems in \cref{eq:pca:principalaxes}
are given by the eigenvectors of the covariance matrix,
i.e.,
$v_1,\dots,v_n$ are the eigenvectors of $\operatorname{Cov}(x^{(1)}, \dots, x^{(N)})$
in order of decreasing eigenvalue $\lambda_1 \geq \cdots \geq \lambda_n$.
Consequently, the principal axes are also sometimes equivalently defined
as the eigenvectors of the covariance matrix.
It should be noted here that the principal axes, like eigenvectors,
are only unique when the eigenvalues are distinct
(i.e., when $\lambda_1 > \cdots > \lambda_n$)
and even then are only unique up to sign.
Finally,
also note that while PCA is often first presented for finite collections of data points,
it immediately generalizes to data distributed according to arbitrary probability measures $\mu$
by replacing the above finite averages with integrals over the measure.
Namely, the above variance and covariance become
\begin{align}
    \operatorname{Var}_{u} (\mu)
    &
    =
    \int
    (u^\top x)^2
    \; d\mu(x)
    , &
    \operatorname{Cov}(\mu)
    &
    =
    \int
    x x^\top
    \; d\mu(x)
    ,
\end{align}
which are often written concisely as the expectations $\mathbb{E}[(u^\top x)^2]$ and $\mathbb{E}[x x^\top]$, respectively.

\subsection{Definition of a polynomial-weighted variance and covariance}

We now define a polynomial-weighted notion of variance and covariance
for polynomials $f \in \mathbb{R}[x_1,\dots,x_n]$
that will form the basis for the proposed polynomial-weighted PCA.
Similar to how \cref{eq:pca:dirvar,eq:pca:cov} capture the ``spread'' of data in different directions,
the overall idea here is to capture how ``large'' $f$ is along any given direction $u \in \mathbb{R}^n$.
Focusing on homogeneous polynomials $f$,
we restrict our study as usual to the unit sphere
\begin{equation}
    \mathbb{S}^{n-1} = \{ x \in \mathbb{R}^n : x_1^2 + \cdots + x_n^2 = 1 \}
    ,
\end{equation}
without loss of generality.
This leads naturally to the following polynomial-weighted notion of variance.

\begin{definition}[Polynomial-weighted variance of a homogeneous polynomial]
    \label{def:pwvar}
    The polynomial-weighted variance of a homogeneous polynomial $f \in \mathbb{R}[x_1,\dots,x_n]$
    along a direction $u \in \mathbb{R}^n$
    is defined as
    \begin{equation}
        \label{eq:pwpca:dirvar}
        \operatorname{Var}_{u} (f)
        =
        \int_{\mathbb{S}^{n-1}}
        f(x)^2
        \,
        (u^\top x)^2
        \; d\mu(x)
        ,
    \end{equation}
    where $\mu$ denotes the spherical measure on $\mathbb{S}^{n-1}$.
\end{definition}

In words, $\operatorname{Var}_{u} (f)$
is the variance along the direction $u$
for points $x$ uniformly distributed on the sphere $\mathbb{S}^{n-1}$,
weighted by their corresponding (squared) polynomial values $f(x)^2$.
As we show below in \cref{thm:equiv:pwvarcov}, this notion captures the ``orientation'' of $f$.
Weighted PCA variants using weighted notions of variance
have also been used in statistical contexts, e.g.,
to account for the relative noise level of different points \cite{hong2023owp}.
Here, the polynomial $f$ sets the amount of weight given to each point.

As with the variance of data along different directions defined in \cref{eq:pca:dirvar},
the polynomial-weighted variance $\operatorname{Var}_{u} (f)$
can be expressed as a quadratic form as follows
\begin{equation*}
    \operatorname{Var}_{u} (f)
    =
    \int_{\mathbb{S}^{n-1}}
    f(x)^2
    \,
    (u^\top x)
    (x^\top u)
    \; d\mu(x)
    =
    u^\top
    \left(
    \int_{\mathbb{S}^{n-1}}
    f(x)^2
    \,
    x x^\top
    \; d\mu(x)
    \right)
    u
    .
\end{equation*}
This leads to the following polynomial-weighted notion of covariance matrix.

\begin{definition}[Polynomial-weighted covariance of a homogeneous polynomial]
    \label{def:pwcov}
    The polynomial-weighted covariance matrix of a homogeneous polynomial $f \in \mathbb{R}[x_1,\dots,x_n]$
    is defined as
    \begin{equation}
        \label{eq:pwpca:cov}
        \operatorname{Cov}(f)
        =
        \int_{\mathbb{S}^{n-1}}
        f(x)^2
        \,
        x x^\top
        \; d\mu(x)
        \in \mathbb{R}^{n \times n}
        ,
    \end{equation}
    where $\mu$ denotes the spherical measure on $\mathbb{S}^{n-1}$.
    Under this definition,
    we have $\operatorname{Var}_{u} (f) = u^\top \operatorname{Cov}(f) u$.
\end{definition}

To gain some further intuition for these notions,
note that the polynomial-weighted variance and covariance
can be written equivalently as
\begin{align*}
    \operatorname{Var}_{u} (f)
    &
    =
    \rho_f
    \cdot
    \int_{\mathbb{S}^{n-1}}
    (u^\top x)^2
    \;
    \left(
    \frac{f(x)^2}{\rho_f}
    \right)
    d\mu(x)
    ,
    &
    \operatorname{Cov}(f)
    &
    =
    \rho_f
    \cdot
    \int_{\mathbb{S}^{n-1}}
    x x^\top
    \;
    \left(
    \frac{f(x)^2}{\rho_f}
    \right)
    d\mu(x)
    ,
\end{align*}
where
$\rho_f = \int_{\mathbb{S}^{n-1}} f(x)^2 \; d\mu(x)$
normalizes $f(x)^2$ to be a probability density
(i.e., $f(x)^2/\rho_f$ integrates to one).
In this interpretation,
the polynomial-weighted variance and covariance
can be thought of as the variance and covariance
for points distributed on the sphere $\mathbb{S}^{n-1}$
with probability density given by $f(x)^2/\rho_f$.

\subsection{Main definition: Polynomial-Weighted Principal Component Analysis}

We are now ready to describe the proposed polynomial-weighted principal component analysis (PW-PCA).
The overall idea is to define principal axes for polynomials
analogous to those defined in \cref{eq:pca:principalaxes} for data
by now maximizing the polynomial-weighted variance (\cref{def:pwvar}).
Since the polynomial-weighted variance is only defined for homogeneous polynomials,
we work with the homogenization
$\overline{f} \in \mathbb{R}[x_1,\dots,x_{n+1}]$
defined as
\begin{equation}
    \label{eq:homo}
    \overline{f}(x_1,\dots,x_{n+1})
    =
    x_{n+1}^{\deg(f)} f(x_1/x_{n+1},\dots,x_n/x_{n+1})
    .
\end{equation}
This leads to the following definition for PW-PCA.
It is mostly a straightforward analogue of \cref{eq:pca:principalaxes},
with some small subtleties in properly handling the homogenizing variable $x_{n+1}$.

\begin{definition}[Polynomial-Weighted Principal Component Analysis]
    \label{def:pwpca}
    For a polynomial $f \in \mathbb{R}[x_1,\dots,x_n]$,
    we call $v_1,\dots,v_n \in \mathbb{R}^n$ a set of principal axes for $f$
    if
    \begin{equation}
        \label{eq:pwpca:principalaxes}
        \begin{split}
        v_1
        &\in
        \operatorname*{argmax}_{u \in \mathbb{R}^{n} : \|u\| = 1}
        \operatorname{Var}_{[u,0]} (\overline{f})
        , \\
        v_2
        &\in
        \operatorname*{argmax}_{u \in \mathbb{R}^{n} : \|u\| = 1}
        \operatorname{Var}_{[u,0]} (\overline{f})
        \quad \text{s.t.} \quad
        u \perp v_1
        , \\
        &\;\;\vdots\\
        v_n
        &\in
        \operatorname*{argmax}_{u \in \mathbb{R}^{n} : \|u\| = 1}
        \operatorname{Var}_{[u,0]} (\overline{f})
        \quad \text{s.t.} \quad
        u \perp \{v_1, \dots, v_{n-1}\}
        .
        \end{split}
    \end{equation}
    We call the associated variances
    $\lambda_i = \operatorname{Var}_{[v_i,0]} (\overline{f})$
    the principal variances,
    and call the pair $(\lambda, V)$ a PW-PCA of $f$,
    where
    $\lambda = (\lambda_1,\dots,\lambda_n) \in \mathbb{R}^n$
    and
    $V = (v_1,\dots,v_n) \in \mathbb{R}^{n \times n}$.
\end{definition}

In words,
the principal axes $v_1,\dots,v_n$ provide an orthonormal basis for $\mathbb{R}^n$
that captures the directions along which the homogenization $\overline{f}$ is large,
and the principal variances $\lambda_1,\dots,\lambda_n$ provide the corresponding
polynomial-weighted variances.

\subsection{Algorithm for Polynomial-Weighted Principal Component Analysis}

We now develop some theory that will lead to an efficient algorithm for computing
the Polynomial-Weighted Principal Component Analysis (PW-PCA)
introduced in \cref{def:pwpca}.
The first theoretical result reduces
the sequence of optimization problems in \cref{eq:pwpca:principalaxes}
to eigendecomposition of the leading principal $n \times n$ submatrix
of polynomial-weighted covariance matrix (\cref{def:pwcov}).

\begin{proposition}[PW-PCA via eigendecomposition]
    \label{thm:pwpca:cov}
    For any $f \in \mathbb{R}[x_1,\dots,x_n]$,
    the following are equivalent:
    \begin{enumerate}
        \item $(\lambda, V)$ is a PW-PCA of $f$
        \item $V \operatorname{diag}(\lambda) V^\top$ is an orthogonal eigendecomposition of $\operatorname{Cov}(\overline{f})_{1:n, 1:n}$
    \end{enumerate}
    where the eigendecomposition here sorts the eigenvalues in non-increasing order,
    i.e., $\lambda_1 \geq \cdots \geq \lambda_n$.
\end{proposition}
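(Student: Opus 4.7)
The plan is to reduce the sequential maximization in the definition of PW-PCA to the classical variational (Rayleigh--Ritz) characterization of eigenvectors of a real symmetric matrix. The key preliminary observation is that, by \Cref{def:pwcov} applied to $\overline{f} \in \mathbb{R}[x_1,\dots,x_{n+1}]$, for any $u \in \mathbb{R}^n$ we have
\begin{equation*}
    \operatorname{Var}_{[u,0]}(\overline{f})
    =
    [u,0]^\top \operatorname{Cov}(\overline{f}) [u,0]
    =
    u^\top \operatorname{Cov}(\overline{f})_{1:n,1:n} \, u
    ,
\end{equation*}
since padding with a zero in the last coordinate selects exactly the leading $n \times n$ principal submatrix of $\operatorname{Cov}(\overline{f})$. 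Writing $A := \operatorname{Cov}(\overline{f})_{1:n,1:n} \in \mathbb{R}^{n \times n}$, the sequential optimization in \cref{eq:pwpca:principalaxes} becomes
\begin{equation*}
    v_k \in \operatorname*{argmax}_{u \in \mathbb{R}^n : \|u\|=1,\, u \perp \{v_1,\dots,v_{k-1}\}} u^\top A u,
    \qquad k=1,\dots,n,
\end{equation*}
with associated optimal values $\lambda_k = v_k^\top A v_k$.

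Next I would note that $A$ is real symmetric (indeed positive semidefinite, as it is an integral of symmetric rank-one matrices $\overline{f}(x)^2 \, [x]_{1:n}[x]_{1:n}^\top$). Thus $A$ admits an orthogonal eigendecomposition $A = U \operatorname{diag}(\mu_1,\dots,\mu_n) U^\top$ with $\mu_1 \geq \cdots \geq \mu_n$ and $U \in O(n)$. At this point I would invoke the standard Rayleigh--Ritz / Courant--Fischer theorem, which states exactly that the greedy maximization of $u^\top A u$ subject to unit-norm and orthogonality constraints at step $k$ produces a unit eigenvector of $A$ with eigenvalue $\mu_k$, and that the maximum value equals $\mu_k$.

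For the $(1)\Rightarrow(2)$ direction, this immediately gives that every PW-PCA $(\lambda, V)$ satisfies $V \in O(n)$, that each $v_k$ is an eigenvector of $A$ with eigenvalue $\lambda_k = \mu_k$, and hence that $V \operatorname{diag}(\lambda) V^\top = A$ is an orthogonal eigendecomposition with $\lambda_1 \geq \cdots \geq \lambda_n$. For the converse $(2)\Rightarrow(1)$, if $V \operatorname{diag}(\lambda) V^\top$ is any orthogonal eigendecomposition of $A$ with $\lambda_1 \geq \cdots \geq \lambda_n$, then Courant--Fischer likewise certifies that the columns of $V$ achieve the sequential maxima and that $\lambda_k = v_k^\top A v_k = \operatorname{Var}_{[v_k,0]}(\overline{f})$, showing that $(\lambda,V)$ is a PW-PCA in the sense of \Cref{def:pwpca}.

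I do not expect a serious obstacle; the only subtlety worth flagging is the non-uniqueness that arises when $A$ has repeated eigenvalues. In that case neither the PW-PCA nor the orthogonal eigendecomposition is unique, but both definitions are formulated as existence of some valid choice within each eigenspace, so the equivalence holds verbatim once Courant--Fischer is invoked. If one prefers to avoid quoting Courant--Fischer, the same result can be proved directly by induction on $k$: at step $k$, expand $u$ in an eigenbasis of $A$ on the orthogonal complement of $\operatorname{span}(v_1,\dots,v_{k-1})$ and observe that $u^\top A u$ is a convex combination of the remaining eigenvalues, maximized precisely by a unit eigenvector corresponding to the largest one, namely $\lambda_k$.
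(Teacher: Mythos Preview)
Your proposal is correct and follows essentially the same route as the paper: both reduce $\operatorname{Var}_{[u,0]}(\overline{f})$ to the quadratic form $u^\top A u$ with $A=\operatorname{Cov}(\overline{f})_{1:n,1:n}$ and then appeal to the Rayleigh--Ritz characterization of eigenvectors. The only cosmetic difference is that the paper spells out the Rayleigh--Ritz argument by hand (expanding $u$ in an orthonormal eigenbasis and bounding $\sum_i \lambda_i (v_i^\top u)^2$), which is precisely the inductive alternative you sketch at the end.
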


This \lcnamecref{thm:pwpca:cov} is analogous to the fact
that the principal axes for data points
are given by eigenvectors of the covariance matrix
(as described in \cref{sec:pca}),
and it can be proven in essentially the same manner.
A typical approach is to characterize the solutions
to the constrained optimization problems in \cref{eq:pwpca:principalaxes}
via Lagrange multipliers,
e.g., as in \cite[Section~1.1]{jolliffe2002pca}.
Here we provide an alternative elementary proof that uses another common approach
(writing the covariance in terms of its eigenvalues and eigenvectors),
which can sometimes be more enlightening.

\begin{proof}[Proof of \cref{thm:pwpca:cov}]
    Using the relationship described in \cref{def:pwcov}
    between the polynomial-weighted variance and covariance,
    we first have that
    \begin{equation*}
        \operatorname{Var}_{[u,0]} (\overline{f})
        = [u, 0]^\top \operatorname{Cov}(\overline{f}) [u, 0]
        =
        \begin{bmatrix} u \\ 0 \end{bmatrix}^\top
        \begin{bmatrix}
            \operatorname{Cov}(\overline{f})_{1:n, 1:n} & \operatorname{Cov}(\overline{f})_{1:n, n+1} \\
            \operatorname{Cov}(\overline{f})_{n+1, 1:n} & \operatorname{Cov}(\overline{f})_{n+1, n+1}
        \end{bmatrix}
        \begin{bmatrix} u \\ 0 \end{bmatrix}
        =
        u^\top \operatorname{Cov}(\overline{f})_{1:n, 1:n} u
        .
    \end{equation*}
    Since $\operatorname{Var}_{[u,0]} (\overline{f}) \geq 0$ for any $u \in \mathbb{R}^n$,
    it follows that $\operatorname{Cov}(\overline{f})_{1:n, 1:n}$ is positive semidefinite
    and its eigenvalues are all nonnegative.
    We now prove each direction one at a time:
    \begin{itemize}
        \item $(2) \implies (1)$.

        Suppose $V \operatorname{diag}(\lambda) V^\top$
        is an orthogonal eigendecomposition
        of $\operatorname{Cov}(\overline{f})_{1:n, 1:n}$,
        i.e., $\lambda_1 \geq \cdots \geq \lambda_n \geq 0$
        are its eigenvalues
        with corresponding orthonormal eigenvectors $v_1,\dots,v_n$
        given by the columns of $V$.
        Substituting then yields
        \begin{equation*}
            \operatorname{Var}_{[u,0]} (\overline{f})
            = u^\top \operatorname{Cov}(\overline{f})_{1:n, 1:n} u
            = u^\top V \operatorname{diag}(\lambda) V^\top u
            = \lambda_1 (v_1^\top u)^2 + \cdots + \lambda_n (v_n^\top u)^2
            .
        \end{equation*}
        Thus, for any $u$ with $\|u\| = 1$, we have the upper bound
        \begin{equation*}
            \operatorname{Var}_{[u,0]} (\overline{f})
            \leq \lambda_1 (v_1^\top u)^2 + \cdots + \lambda_1 (v_n^\top u)^2
            = \lambda_1 \| V^\top u \|^2
            = \lambda_1 \| u \|^2
            = \lambda_1
            ,
        \end{equation*}
        which is achieved by $v_1$
        since
        $\operatorname{Var}_{[v_1,0]} (\overline{f}) = \lambda_1 (v_1^\top v_1)^2 + \cdots + \lambda_n (v_n^\top v_1)^2 = \lambda_1$,
        and so
        \begin{equation*}
            v_1
            \in
            \operatorname*{argmax}_{u \in \mathbb{R}^{n} : \|u\| = 1}
            \operatorname{Var}_{[u,0]} (\overline{f})
            .
        \end{equation*}
        Similarly, for any $u$ with $\|u\| = 1$ such that $u \perp v_1$, we have the upper bound
        \begin{equation*}
            \operatorname{Var}_{[u,0]} (\overline{f})
            = \lambda_2 (v_2^\top u)^2 + \cdots + \lambda_n (v_n^\top u)^2
            \leq \lambda_2 (v_2^\top u)^2 + \cdots + \lambda_2 (v_n^\top u)^2
            = \lambda_2 \| V^\top u \|^2
            = \lambda_2 \| u \|^2
            = \lambda_2
            ,
        \end{equation*}
        which is achieved by $v_2$
        since
        $\operatorname{Var}_{[v_2,0]} (\overline{f}) = \lambda_1 (v_1^\top v_2)^2 + \cdots + \lambda_n (v_n^\top v_2)^2 = \lambda_2$,
        and so
        \begin{equation*}
            v_2
            \in
            \operatorname*{argmax}_{u \in \mathbb{R}^{n} : \|u\| = 1}
            \operatorname{Var}_{[u,0]} (\overline{f})
            \quad \text{s.t.} \quad
            u \perp v_1
            .
        \end{equation*}
        It follows in a similar fashion that $v_3,\dots,v_n$
        solve the associated optimization problems in \cref{eq:pwpca:principalaxes},
        and hence $v_1,\dots,v_n$ form a set of principal axes for $f$
        with associated principal variances given by $\lambda_1,\dots,\lambda_n$.
        This completes the proof of this direction.

        \item $(1) \implies (2)$.

        Suppose $(\lambda, V)$ is a PW-PCA of $f$,
        i.e., the columns $v_1,\dots,v_n$ of $V$ are a set of principal axes for $f$
        with associated principal variances $\lambda_1 \geq \cdots \geq \lambda_n \geq 0$.
        Next, recall that $\operatorname{Cov}(\overline{f})_{1:n, 1:n}$ is a symmetric matrix,
        so it always has
        an orthogonal eigendecomposition $\tilde{V} \operatorname{diag}(\tilde{\lambda}) \tilde{V}^\top$.
        Then by the same arguments as above,
        for any $u$ with $\|u\| = 1$,
        we have the upper bound
        \begin{equation*}
            \operatorname{Var}_{[u,0]} (\overline{f})
            = u^\top \operatorname{Cov}(\overline{f})_{1:n, 1:n} u
            = u^\top \tilde{V} \operatorname{diag}(\tilde{\lambda}) \tilde{V}^\top u
            = \tilde{\lambda}_1 (\tilde{v}_1^\top u)^2 + \cdots + \tilde{\lambda}_n (\tilde{v}_n^\top u)^2
            \leq \tilde{\lambda}_1
            ,
        \end{equation*}
        which is achieved by $\tilde{v}_1$.
        Note now that this upper-bound is also achieved by any unit vector
        in the eigenspace associated with $\tilde{\lambda}_1$
        (i.e., the span of the eigenvectors with equal eigenvalue),
        and is moreover not achieved by any other unit vector.
        Thus, we have that
        \begin{equation*}
            \operatorname{span}(\tilde{v}_i : \tilde{\lambda}_i = \tilde{\lambda}_1)
            \cap
            \mathbb{S}^{n-1}
            =
            \operatorname*{argmax}_{u \in \mathbb{R}^{n} : \|u\| = 1}
            \operatorname{Var}_{[u,0]} (\overline{f})
            ,
        \end{equation*}
        and so it follows that $v_1 \in \operatorname{span}(\tilde{v}_i : \tilde{\lambda}_i = \tilde{\lambda}_1)$
        and $\lambda_1 = \tilde{\lambda}_1$.
        Hence, $v_1$ is an eigenvector of $\operatorname{Cov}(\overline{f})_{1:n, 1:n}$
        with corresponding eigenvalue $\lambda_1$.

        By similar arguments,
        it follows that $v_2,\dots,v_n$ are all eigenvectors of $\operatorname{Cov}(\overline{f})_{1:n, 1:n}$
        with corresponding eigenvalues $\lambda_2 \geq \cdots \geq \lambda_n$.
        In other words,
        $\operatorname{Cov}(\overline{f})_{1:n, 1:n} v_i = \lambda_i v_i$ for each $i \in \{1,\dots,n\}$,
        which can be written concisely in matrix form as
        \begin{equation*}
            \operatorname{Cov}(\overline{f})_{1:n, 1:n} V
            =
            [\operatorname{Cov}(\overline{f})_{1:n, 1:n} v_1, \dots, \operatorname{Cov}(\overline{f})_{1:n, 1:n} v_n]
            =
            [\lambda_1 v_1, \dots, \lambda_n v_n]
            =
            V \operatorname{diag}(\lambda)
            .
        \end{equation*}
        Now, recall that by construction $v_1,\dots,v_n$ are orthonormal
        so $V$ is an orthogonal matrix and hence
        \begin{equation*}
            V \operatorname{diag}(\lambda) V^\top
            =
            \operatorname{Cov}(\overline{f})_{1:n, 1:n} V V^\top
            =
            \operatorname{Cov}(\overline{f})_{1:n, 1:n}
            ,
        \end{equation*}
        i.e., $V \operatorname{diag}(\lambda) V^\top$ is an orthogonal eigendecomposition
        of $\operatorname{Cov}(\overline{f})_{1:n, 1:n}$.
        This completes the proof of this direction.
    \end{itemize}
\end{proof}

It now remains to have an efficient way of computing
the polynomial-weighted covariance matrix.
Notably, its definition involves multivariate surface integrations
which are not straightforward to implement directly.
Here, we provide an explicit algebraic expression that can be implemented easily and efficiently.
For this, we first recall and introduce some standard notations.

\begin{notation}
We use the following notations:

\begin{itemize}

\item $\Gamma$ denotes the gamma function.

\item $!!$ denotes the double factorial. 

\item $e_{i}$ denotes the $i$-th standard basis vector in $\mathbb{R}^n$.

\item $\eta(s) =  \begin{cases}
s, & \text{if $s$ is odd} , \\
s-1, & \text{if $s$ is even} , \\
\end{cases}
$ ``rounds'' $s$ down to the nearest odd number.

\end{itemize}
\end{notation}

We now state the algebraic expression
for the polynomial-weighted covariance
that we will use to obtain an efficient algorithm
for computing PW-PCA's.
It shows that each term of the polynomial
only contributes to a few entries of the polynomial-weighted covariance.

\begin{theorem}
[Expression for the polynomial-weighted covariance] \label{thm:pwcov:formulas}
Let $f\in\mathbb{R}[x_{1},\dots,x_{n}]$ be a homogeneous polynomial of degree
$d$.
Let $a \cdot x^{\mu}$ denote each term in $f^2$, that is,
$f^2 = \sum{a \cdot x^{\mu}}$.
Then we have
\begin{equation}
\operatorname{Cov}(f)\;\;=\;\;\frac{\pi^{n/2}}{2^{d} \;\Gamma(n/2+d+1)}
\cdot \sum\,a\cdot\Psi(x^{\mu})
\end{equation}
where $\Psi(x^{\mu})\in\mathbb{N}_{\geq0}^{n\times n}$ is given by
\begin{equation}
\Psi(x^{\mu})=\left(  \prod_{k=1}^{n}\eta(\mu_{k})!!\right)
\begin{cases}
\operatorname{diag}(\mu)+I_{n}, & \text{if $\mu$ has no odd entries},\\
e_{u}e_{v}^{\top}+e_{v}e_{u}^{\top}, & \text{if $\mu$ has two odd entries at
indices $u$ and $v$},\\
0_{n\times n}, & \text{otherwise}.
\end{cases}
\end{equation}

\end{theorem}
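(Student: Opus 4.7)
The plan is to reduce the computation of $\operatorname{Cov}(f)$ to evaluating integrals of monomials $x^\nu$ over the sphere, then invoke the classical formula for such integrals and simplify via the double-factorial identity $\Gamma(k+\tfrac{1}{2}) = \sqrt{\pi}(2k-1)!!/2^k$. By linearity of the integral in \Cref{def:pwcov}, the $(i,j)$ entry of $\operatorname{Cov}(f)$ is
\begin{equation*}
\operatorname{Cov}(f)_{i,j}
= \int_{\mathbb{S}^{n-1}} f(x)^2\, x_i x_j\, d\mu(x)
= \sum a \int_{\mathbb{S}^{n-1}} x^{\mu + e_i + e_j}\, d\mu(x),
\end{equation*}
so the whole problem is to understand $\int_{\mathbb{S}^{n-1}} x^\nu\, d\mu(x)$ as a function of $\nu = \mu + e_i + e_j$.

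Next, I would recall the well-known formula (derivable by converting to the Gaussian integral and using polar coordinates): for $\nu \in \mathbb{N}_{\ge 0}^n$ with $|\nu| = 2d+2$, one has $\int_{\mathbb{S}^{n-1}} x^\nu\, d\mu(x) = 0$ unless every $\nu_k$ is even, in which case $\int x^\nu\, d\mu = \frac{2\prod_k \Gamma(\tfrac{\nu_k+1}{2})}{\Gamma(\tfrac{n}{2}+d+1)}$. Applying $\Gamma(k+\tfrac{1}{2}) = \sqrt{\pi}(2k-1)!!/2^k$ to each factor, this rewrites cleanly as
\begin{equation*}
\int_{\mathbb{S}^{n-1}} x^\nu\, d\mu(x)
= \frac{\pi^{n/2}}{2^{d}\,\Gamma(n/2+d+1)} \prod_{k=1}^n (\nu_k - 1)!!,
\end{equation*}
with the convention $(-1)!!=1$. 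The overall scalar prefactor matches the one in the theorem statement, so it only remains to identify the combinatorial factor $\prod_k(\nu_k-1)!!$ with the entries of $\Psi(x^\mu)$.

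The remaining work is a case analysis on the parities of the entries of $\mu$, since $\nu = \mu + e_i + e_j$ is all-even exactly when the two flipped positions $i,j$ line up with the odd positions of $\mu$. If $\mu$ is all-even, the only surviving $(i,j)$ are the diagonal ones $i=j$; there $\nu_i = \mu_i+2$ and $\nu_k = \mu_k$ otherwise, so $\prod(\nu_k-1)!! = (\mu_i+1)!!\prod_{k\ne i}(\mu_k-1)!! = (\mu_i+1)\prod_k(\mu_k-1)!!$, which equals $(\operatorname{diag}(\mu)+I_n)_{i,i}\cdot\prod_k\eta(\mu_k)!!$ since $\eta(\mu_k)=\mu_k-1$ on even inputs. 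If $\mu$ has exactly two odd entries at positions $u,v$, then $\nu$ is all-even only when $\{i,j\}=\{u,v\}$, and $\prod(\nu_k-1)!! = \mu_u!!\,\mu_v!!\prod_{k\ne u,v}(\mu_k-1)!! = \prod_k \eta(\mu_k)!!$, which is precisely the scalar multiplying $e_ue_v^\top+e_ve_u^\top$. In the remaining cases (one odd entry, or three or more odd entries), at least one coordinate of $\nu$ stays odd for every choice of $(i,j)$, so every integral vanishes and $\Psi(x^\mu)=0$.

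The main obstacle is not conceptual but bookkeeping: one must verify that the $\eta$-double-factorial packaging reproduces $\prod_k(\nu_k-1)!!$ in both nonzero cases, and in particular track where the extra factor $\mu_i+1$ comes from on the diagonal (it arises from $(\mu_i+1)!!/(\mu_i-1)!! = \mu_i+1$ when $\mu_i$ is even). Once these two identities are checked the formula drops out by summing over terms $a\cdot x^\mu$ of $f^2$.
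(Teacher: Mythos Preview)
Your proposal is correct and follows essentially the same route as the paper's own proof: linearize over the monomials of $f^2$, invoke Folland's formula for $\int_{\mathbb{S}^{n-1}} x^{\mu+e_i+e_j}\,d\mu$, simplify the gamma functions to double factorials to extract the common prefactor $\pi^{n/2}/(2^d\Gamma(n/2+d+1))$, and then run the parity case analysis on $\mu$ to identify the matrix $\Psi(x^\mu)$. The bookkeeping you flag (in particular the appearance of the factor $\mu_i+1$ on the diagonal via $(\mu_i+1)!!/(\mu_i-1)!!$) is exactly what the paper carries out as well.
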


The proof of \cref{thm:pwcov:formulas} combines standard formulas \cite{Folland2001}
for integrating monomials over the sphere
(i.e., for computing statistical moments for the uniform distribution on the sphere)
with a careful analysis of the monomial exponents that appear in the polynomial-weighted covariance.
In particular,
our analysis uncovers and exploits sparsity in the contribution of each polynomial term
to the polynomial-weighted covariance.

\begin{proof}
[Proof of \cref{thm:pwcov:formulas}] Applying
\cref{eq:pwpca:cov} in Definition~\ref{def:pwcov} and factoring yields
\begin{equation}
\operatorname{Cov}(f)
=\int_{\mathbb{S}^{n-1}}\left(  \sum a\cdot x^{\mu}\right)  \,xx^{\top}\;d\mu(x)
=\sum a\cdot\int_{\mathbb{S}^{n-1}}x^{\mu}\,xx^{\top}\;d\mu(x)
=\sum a\cdot\operatorname{Cov}(x^{\mu}).\label{eq:cov:mons}%
\end{equation}
Hence, the problem reduces to computing covariance matrices for monomials $x^{\mu
}$ of degree $2d$. That is, we need to compute $\operatorname{Cov}(x^{\mu})$,
whose entries are given by
\[
\operatorname{Cov}(x^{\mu})_{ij}=\left(  \int_{\mathbb{S}^{n-1}}x^{\mu
}\,xx^{\top}\;d\mu(x)\right)  _{ij}=\int_{\mathbb{S}^{n-1}}x^{\mu}\,x_{i}%
x_{j}\;d\mu(x)=\int_{\mathbb{S}^{n-1}}x^{\mu+e_{i}+e_{j}}\;d\mu(x).
\]
Formulas for integrating these monomials over the unit sphere are provided in
\cite{Folland2001}. Specifically, applying the main theorem in
\cite{Folland2001}, we obtain the following two cases
\begin{equation}%
\begin{split}
\text{if $\mu+e_{i}+e_{j}$ has no odd entries} &  :\quad\operatorname{Cov}%
(x^{\mu})_{ij}=2\frac{\prod_{k=1}^{n}\Gamma(1/2+(\mu+e_{i}+e_{j})_{k}%
/2)}{\Gamma(\sum_{k=1}^{n}(1/2+(\mu+e_{i}+e_{j})_{k}/2))},\\
\text{otherwise} &  :\quad\operatorname{Cov}(x^{\mu})_{ij}=0.
\end{split}
\label{eq:cov:mon:entry}%
\end{equation}
Since the sum of the entries of $\mu+e_{i}+e_{j}$ is always $2d+2$, the
denominator in the first case simplifies to
\begin{equation}
\Gamma\left(  \sum_{k=1}^{n}(1/2+(\mu+e_{i}+e_{j})_{k}/2)\right)
=\Gamma(n/2+(2d+2)/2)=\Gamma(n/2+d+1).\label{eq:cov:mon:entry:nonzero:denom}%
\end{equation}
Moreover, all the entries of $(\mu+e_{i}+e_{j})/2$ are integers for this
\textquotedblleft only even entries\textquotedblright\ case, so the numerator
can also be rewritten as
\begin{align}
&  \prod_{k=1}^{n}\Gamma(1/2+(\mu+e_{i}+e_{j})_{k}/2)=\prod_{k=1}^{n}\left(
\frac{(2(\mu+e_{i}+e_{j})_{k}/2-1)!!}{2^{(\mu+e_{i}+e_{j})_{k}/2}}\sqrt{\pi
}\right)  \label{eq:cov:mon:entry:nonzero:numer}\\
&  \qquad\qquad=\pi^{n/2}\frac{\prod_{k=1}^{n}(2(\mu+e_{i}+e_{j})_{k}%
/2-1)!!}{2^{\sum_{k=1}^{n}(\mu+e_{i}+e_{j})_{k}/2}}=\frac{\pi^{n/2}}{2^{d+1}%
}\prod_{k=1}^{n}((\mu+e_{i}+e_{j})_{k}-1)!!.\nonumber
\end{align}
Substituting
\cref{eq:cov:mon:entry:nonzero:denom,eq:cov:mon:entry:nonzero:numer} back into
\cref{eq:cov:mon:entry} and factoring yields
\begin{equation}
\operatorname{Cov}(x^{\mu})=\frac{\pi^{n/2}}{2^{d}\;\Gamma
(n/2+d+1)}\cdot\Psi(x^{\mu}),\label{eq:mon:psi}%
\end{equation}
where the entries of $\Psi(x^{\mu})\in\mathbb{N}_{\geq0}^{n\times n}$ are defined
as
\begin{equation}
\Psi(x^{\mu})_{ij}=%
\begin{cases}
\prod_{k=1}^{n}((\mu+e_{i}+e_{j})_{k}-1)!!, & \text{if $\mu+e_{i}+e_{j}$ has
no odd entries},\\
0, & \text{otherwise}.
\end{cases}
\label{eq:psi:even}%
\end{equation}
Note now that there are only two scenarios in which $\mu+e_{i}+e_{j}$ ends up
having no odd entries: 

\begin{enumerate}

\item $\mu$ has no odd entries. In this scenario, $\mu+e_{i}+e_{j}$ has no odd
entries when $i=j$, so $\Psi(x^{\mu})$ is a diagonal matrix with entries
\[
\Psi(x^{\mu})_{ii}=\prod_{k=1}^{n}((\mu+e_{i}+e_{i})_{k}-1)!!=\underbrace{((\mu
_{i}+2)-1)!!}_{(\mu_{i}+1)(\mu_{i}-1)!!}\prod_{k\neq i}(\mu_{k}-1)!!=(\mu
_{i}+1)\prod_{k=1}^{n}(\mu_{k}-1)!!.
\]
Thus, we have for this case that
\begin{equation}
\Psi(x^{\mu})=\left(  \prod_{k=1}^{n}(\mu_{k}-1)!!\right)  \cdot
(\operatorname{diag}(\mu)+I_{n})=\left(  \prod_{k=1}^{n}\eta(\mu
_{k})!!\right)  \cdot(\operatorname{diag}(\mu)+I_{n}).\label{eq:psi:even1}%
\end{equation}

\item $\mu$ has two odd entries at indices $u$ and $v$. In this scenario,
$\mu+e_{i}+e_{j}$ has no odd entries when $(i,j)=(u,v)$ or $(i,j)=(v,u)$, so
$\Psi(x^{\mu})$ has only the following two nonzero entries
\[
\Psi(x^{\mu})_{uv}=\Psi(x^{\mu})_{vu}=\prod_{k=1}^{n}((\mu+e_{u}+e_{v})_{k}%
-1)!!=((\mu_{u}+1)-1)!!((\mu_{v}+1)-1)!!\prod_{k\neq u,v}(\mu_{k}-1)!!.
\]
Thus, we have for this case that
\begin{equation}
\Psi(x^{\mu})=((\mu_{u}+1)-1)!!((\mu_{v}+1)-1)!!\prod_{k\neq u,v}(\mu
_{k}-1)!!\cdot(e_{u}e_{v}^{\top}+e_{v}e_{u}^{\top})=\left(  \prod_{k=1}%
^{n}\eta(\mu_{k})!!)\right)  \cdot(e_{u}e_{v}^{\top}+e_{v}e_{u}^{\top
}).\label{eq:psi:even2}%
\end{equation}

\end{enumerate}
For other scenarios, $\mu+e_{i}+e_{j}$ always has at least one odd entry which gives us that
$\Psi(x^{\mu})=0_{n\times n}$ in these cases.
The proof concludes by combining the equations  \eqref{eq:cov:mons},
\eqref{eq:mon:psi},  \eqref{eq:psi:even},  \eqref{eq:psi:even1}, and
\eqref{eq:psi:even2}  and factoring.
\end{proof}

We now have a path to efficient computation of PW-PCA's.
In particular, it follows from \cref{thm:pwpca:cov}
that we need only compute an orthogonal eigendecomposition
for the leading principal $n \times n$ submatrix of the polynomial-weighted covariance matrix,
which can be computed efficiently via the expression in \cref{thm:pwcov:formulas}.
This leads to the following algorithm.

\begin{algorithm}
[\textsf{PW-PCA}]\ \label{alg:pwpca}\medskip

{\bf Input:} $f\in\mathbb{R}[x_{1},\ldots,x_{n}]$\medskip

{\bf Output:} $\left(  \lambda,V\right)  $, a PW-PCA of $f$

\begin{enumerate}
\item $\overline{f}\leftarrow x_{n+1}^{\deg f}f\left(  x_{1}/x_{n+1}%
,\ldots,x_{n}/x_{n+1}\right)  $, the homogenization of $f$

\item $\overline{C}\leftarrow\operatorname{Cov}(\overline{f})$, by applying
Theorem~\ref{thm:pwcov:formulas} to~$\overline{f}$

\item $C\leftarrow\overline{C}_{1:n,1:n}$

\item $\lambda\leftarrow$ the list of $n$ eigenvalues of $C$ in non-increasing order

\item $V\leftarrow$ the matrix whose columns are the corresponding
eigenvectors of $C$
\end{enumerate}
\end{algorithm}

\bigskip

We illustrate the above algorithm on the running example.   
We executed the  algorithm  both exactly (exact arithmetic) and approximately (double precision floating point arithmetic); in the approximate case we show only 3 decimal digits. 

\begin{example}[Running example continued]\mbox

Let $f=-27x_{1}^{3}+27x_{2}^{2}x_{3}-9x_{3}$ be the input to the PW-PCA algorithm (Algorithm \ref{alg:pwpca}). Below we carry out the steps of the algorithm. 

\begin{enumerate}
\item $\overline{f}\leftarrow-27 x_{1}^{3}+27 x_{2}^{2} x_{3}-9 x_{3}
x_{4}^{2}$.

\item $\overline{C}\leftarrow\frac{\pi^{2}}{2^{3}\cdot5!}\left[
\begin{array}
[c]{cccc}%
78489 & 0 & -2916 & 0\\
0 & 20655 & 0 & 0\\
-2916 & 0 & 16767 & 0\\
0 & 0 & 0 & 12879
\end{array}
\right]  \allowbreak$

$\hspace{11pt}\approx\left[
\begin{array}
[c]{rrrr}%
806.933 & 0.000 & -29.979 & 0.000\\
0.000 & 212.351 & 0.000 & 0.000\\
-29.979 & 0.000 & 172.379 & 0.000\\
0.000 & 0.000 & 0.000 & 132.407
\end{array}
\right]$

\item $C\leftarrow\frac{\pi^{2}}{2^{3}\cdot5!}\left[
\begin{array}
[c]{ccc}%
78489 & 0 & -2916\\
0 & 20655 & 0\\
-2916 & 0 & 16767
\end{array}
\right]  $

\hspace{11pt}$\approx\left[
\begin{array}
[c]{rrr}%
806.933 & 0.000 & -29.979\\
0.000 & 212.351 & 0.000\\
-29.979 & 0.000 & 172.379
\end{array}
\right]  $

\item $\lambda\leftarrow\frac{\pi^{2}}{2^{3}\cdot5!}\left[
\begin{array}
[c]{ccc}%
47628+243\sqrt{16273} & 20655 & 47628-243\sqrt{16273}%
\end{array}
\right]  $

\hspace{11pt} $\approx\left[  808.346,\;212.351,\;170.966\right]  $

\item $V\leftarrow\left[
\begin{array}
[c]{ccc}%
\frac{\sqrt{32546-254\sqrt{16273}}}{24}+\frac{127\sqrt{32546-254\sqrt{16273}%
}\,\sqrt{16273}}{390552} & 0 & \frac{\sqrt{32546+254\sqrt{16273}}}{24}%
-\frac{127\sqrt{32546+254\sqrt{16273}}\,\sqrt{16273}}{390552}\\
0 & 1 & 0\\
-\frac{\sqrt{32546-254\sqrt{16273}}\,\sqrt{16273}}{32546} & 0 & \frac
{\sqrt{32546+254\sqrt{16273}}\,\sqrt{16273}}{32546}%
\end{array}
\right]  $

\hspace{12pt} $\approx\left[
\begin{array}
[c]{rrr}%
0.999 & 0.000 & 0.047\\
0.000 & -1.000 & 0.000\\
-0.047 & 0.000 & 0.999
\end{array}
\right]  $
\end{enumerate}

The output of Algorithm \ref{alg:pwpca} is then $(\lambda, V)$, which is the PW-PCA given in approximate form by $$
(\lambda, V)\approx \left(\left[  808.346,\;212.351,\;170.966\right]  ,\left[
\begin{array}
[c]{rrr}%
0.999 & 0.000 & 0.047\\
0.000 & -1.000 & 0.000\\
-0.047 & 0.000 & 0.999
\end{array}
\right]\right).
$$ Note that to represent the exact version of $(\lambda, V)$ on a computer for algorithmic purposes we are required to work in some algebraic field extension of the rationals. 
\end{example}

\section{Certificate of Orthogonal Equivalence via PW-PCA}
\label{sec:coe_pwpca}

In this section, we will
develop the required mathematical theory, along with a resulting algorithm, to tackle the main problem
of this paper (Problem \ref{prb:problem}).   
To accomplish this we will  investigate the  connection between orthogonal equivalence and  polynomial-weighted principal
component analysis (PW-PCA) introduced in the previous section.
 The key results are contained
in Theorem~\ref{thm:ortho:equiv}. We will first state the key results
precisely and then we will provide a detailed proof via several lemmas.
Finally, we will recast the key results into an algorithm.

\medskip 

As is commonly done in the theory of PCA, the key results will be stated for polynomials  whose  principal axes are uniquely determined  up to sign (see Definition \ref{def:signflip(n)}), or equivalently, will be stated for polynomials whose
whose principal variances are distinct. As expected almost all polynomials have principal variances which are distinct; in other words a generic polynomial has distinct principal variances. Throughout this paper we will assume all polynomials satisfy this condition, stated precisely in Assumption \ref{cond:simple:eigvals} below.
\begin{assumption}
[Global assumption: distinct principal variances]\label{cond:simple:eigvals} From now on, we assume
that every polynomial satisfies the following condition: its principal variances $\lambda_1,\dots,\lambda_n$ are distinct, i.e., $\lambda_1 > \dots > \lambda_n$.
\end{assumption}

As observed above the property of having distinct principal variances is true for a Zariski dense set of polynomials, that is a generic polynomial will have this property. This is stated precisely in Proposition \ref{prp:genericity} below. 
\begin{proposition}
[Genericity of distinct principal variances]\label{prp:genericity} Let $n\geq2$ and
$d\geq2$. Assumption \ref{cond:simple:eigvals} is \emph{generic} in the sense that it holds for almost all
$f\in\mathbb{R}\left[  x_{1},\ldots,x_{n}\right]  $ of degree at most $d$.
\end{proposition}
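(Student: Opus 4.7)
The plan is to identify the space of polynomials of degree at most $d$ with $\mathbb{R}^N$ (where $N = \binom{n+d}{d}$) by reading off coefficients, encode the failure of \cref{cond:simple:eigvals} as the vanishing of a single polynomial $\Delta$ on $\mathbb{R}^N$, and then exhibit one polynomial $f_0$ at which $\Delta$ does not vanish. Once this is done, $\{\Delta = 0\}$ is a proper real algebraic subvariety of $\mathbb{R}^N$, so it is closed, nowhere dense, and Lebesgue-null, which is the desired notion of genericity.

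The key preparatory observation I would establish is that $\Delta$ is polynomial in the coefficient vector $c$. By \cref{thm:pwpca:cov}, the principal variances of $f$ are the eigenvalues of $C(f) \coloneqq \operatorname{Cov}(\overline{f})_{1:n,1:n}$, and by \cref{thm:pwcov:formulas} the entries of $C(f)$ are polynomial in the coefficients of $\overline{f}^2$, which are in turn quadratic in the coefficients of $\overline{f}$, hence polynomial in the coefficients of $f$. I would then take $\Delta(c)$ to be the discriminant of the characteristic polynomial $\det(\lambda I_n - C(f)) \in \mathbb{R}[c][\lambda]$, so that $\Delta$ is itself a polynomial in $c$, and $\Delta(c) \neq 0$ holds precisely when $C(f)$ has $n$ distinct eigenvalues, which is exactly \cref{cond:simple:eigvals}.

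The bulk of the argument is then producing a specific $f_0$ with $\Delta(f_0) \neq 0$. I would choose $f_0 = \sum_{i=1}^n i \cdot x_i^2$, which lies in the ambient coefficient space for every $d \geq 2$. Since $f_0$ is already homogeneous of degree $2$, its homogenization equals itself, and $\overline{f_0}^2$ expands as a sum of monomials $x_i^4$ and $x_i^2 x_j^2$, all with only even exponents. By \cref{thm:pwcov:formulas}, the off-diagonal case requires a monomial with exactly two odd entries, so $C(f_0)$ is diagonal. A short bookkeeping using the explicit formula in \cref{thm:pwcov:formulas} then shows that, up to a positive multiplicative constant, the $k$-th diagonal entry is $8k^2 + 4kS + 2Q + S^2$, where $S = \sum_{i=1}^n i$ and $Q = \sum_{i=1}^n i^2$. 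Their pairwise differences factor as $4(k-l)(2k + 2l + S)$, which is strictly positive for distinct $k,l \in \{1,\dots,n\}$, so all eigenvalues of $C(f_0)$ are distinct.

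I expect the main obstacle to be the careful application of \cref{thm:pwcov:formulas} to every monomial of $\overline{f_0}^2$: one must track the $(n+1)$-st coordinate introduced by homogenization, correctly apply the diagonal versus two-odd-indices case distinction, and assemble the pieces into the closed form above. Once the diagonal entries are in hand, their pairwise distinctness is immediate, and the passage from $\Delta \not\equiv 0$ to the measure-zero and Zariski-closed failing set is standard.
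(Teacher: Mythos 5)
Your proposal follows the same overall strategy as the paper's proof: identify the coefficient space with $\mathbb{R}^m$, take $\Delta$ to be the discriminant of the characteristic polynomial of $C_f$ (so that the failing set of \cref{cond:simple:eigvals} is contained in $\{\Delta = 0\}$), and then exhibit a single explicit witness polynomial with all monomials of even multidegree so that $C_f$ is diagonal with visibly distinct entries. The only real divergence is your choice of witness: the paper uses $f^\ast = \sum_{s=1}^n (n+1-s)\, x_1^{d-2} x_s^2$, which is homogeneous of degree $d$, while you use the degree-$2$ polynomial $f_0 = \sum_{i=1}^n i\, x_i^2$. Your choice is arguably cleaner: the diagonal entries come out to $8k^2 + 4kS + 2Q + S^2$ (up to a positive scalar), whose pairwise differences factor as $4(k-l)(2k+2l+S)$, so distinctness is immediate, whereas the paper's witness leads to a more laborious chain of inequalities to show $C_{f^\ast,11} > C_{f^\ast,22} > \cdots > C_{f^\ast,nn}$.

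One small imprecision worth flagging: you write that since $f_0$ is homogeneous of degree $2$, ``its homogenization equals itself.'' For the discriminant $\Delta$ to be a genuine polynomial on the full coefficient space of degree-$\leq d$ polynomials — which is what is needed for the Zariski-closed conclusion — the homogenization must be taken uniformly to degree $d$, i.e.\ $\overline{f} = x_{n+1}^d f(x/x_{n+1})$, not to $\deg f$. When $d>2$ this means $\overline{f_0} = x_{n+1}^{d-2}\sum_i i\, x_i^2$ rather than $f_0$ itself. Fortunately this does not break the argument: the extra factor $x_{n+1}^{2d-4}$ in $\overline{f_0}^2$ is even, so $C_{f_0}$ remains diagonal, and working through \cref{thm:pwcov:formulas} one finds that the diagonal entries just pick up a common positive factor of $(2d-5)!!$, leaving the pairwise differences nonzero. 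So the conclusion stands, but the justification should acknowledge this degree-$d$ homogenization explicitly.
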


\begin{proof}
The proof is conceptually straightforward and is based on a standard arguments which will show that the set of polynomials which do not have distinct principal variances is closed in the Zariski topology. However, since the proof  involves several long and tedious algebraic manipulations which do not add significant theoretical insight, we opt to defer the proof to Appendix \ref{appendix:A}.
\end{proof}

\smallskip

In order to deal with the intrinsic sign ambiguity precisely and systematically we   introduce   a  finite group, which we will refer to as the \textquotedblleft
signflip\textquotedblright group, and similar to orthogonal group (see
Definition~\ref{def:O(n)}) we give notations for the associated action
and equivalence.

\begin{definition}
[Signflip group, action and equivalence]\label{def:signflip(n)}\ \ 

\begin{enumerate}
\item (\textsf{Group}) The $n$-dimensional \emph{signflip group}, denoted as
$\pm(n)$, is defined by
\begin{equation}
\pm(n)=\{-1,1\}^{n}.
\end{equation}

\item (\textsf{Action}) The \emph{action} of $\sigma\in\pm(n)$ on $p\in$
$\mathbb{R}[x_{1},\ldots,x_{n}]$, denoted as $\sigma\bullet p$, is defined by%
\[
(\sigma\bullet p)(x)=p(\operatorname{diag}(\sigma)x).
\]

\item (\textsf{Equivalence}) We say that $f,g\in\mathbb{R}[x_{1},\ldots
,x_{n}]$ are \emph{signflip equivalent}, and write $f\sim_{\pm(n)}g$, if
\[
\exists_{\sigma\in\pm(n)}\ g=\sigma\bullet f.
\]

\end{enumerate}
\end{definition}

\noindent Now we are ready to state the \emph{key} theoretical result of this section.
\begin{theorem}[Key result: orthogonal equivalence via PW-PCA]\label{thm:ortho:equiv} Suppose $f, g
\in\mathbb{R}[x_{1},\dots,x_{n}]$ both satisfy the generic
\cref{cond:simple:eigvals}, and let $(\lambda_{f}, V_{f})$ and $(\lambda_{g},
V_{g})$ be PW-PCA's for $f$ and $g$, respectively. Then we have

\begin{enumerate}
\item[\textrm{\bf C1}:] $f \sim_{O(n)} g\;\;\;\implies\;\;\;\lambda_{f}
= \lambda_{g}$,

\item[\textrm{\bf C2}:] $f \sim_{O(n)} g \;\;\;\iff\;\;\;\hat{f}
\sim_{\pm(n)} \hat{g}$,

where $\hat{f} = V_{f} \bullet f$ and $\hat{g} = V_{g} \bullet g$.
\end{enumerate}

\noindent Moreover we also have

\begin{enumerate}
\item[\textrm{\bf C3}:] $\sigma\bullet\hat{f} = \hat{g}\;\;\;\implies
\;\;\; R \bullet f = g$ \;\;\;where $R = V_{f} \operatorname{diag}(\sigma)
V_{g}^{\top}$.
\end{enumerate}
\end{theorem}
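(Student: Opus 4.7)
The plan is to establish a single equivariance lemma for the polynomial-weighted covariance, and then deploy it to prove each of the three conclusions. The key observation is that for any $Q \in O(m)$ and any homogeneous $h \in \mathbb{R}[x_1,\dots,x_m]$, the change of variables $y = Qx$ in the defining integral from \cref{def:pwcov}, together with the rotational invariance of the spherical measure, yields the equivariance
\[
\operatorname{Cov}(Q \bullet h) = Q^\top \operatorname{Cov}(h) \, Q.
\]
I would then check that homogenization commutes with the action in the sense that $\overline{Q \bullet f} = \bar Q \bullet \bar f$, where $\bar Q = \operatorname{diag}(Q,1) \in O(n+1)$, so that when $g = Q \bullet f$, applying the above to $\bar f$ and extracting the leading $n \times n$ block (using that $\bar Q$ is block diagonal) gives
\[
\operatorname{Cov}(\bar g)_{1:n,1:n} = Q^\top \operatorname{Cov}(\bar f)_{1:n,1:n} \, Q.
\]

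From here, \textbf{C1} is immediate: the two submatrices are orthogonally similar, hence cospectral, and \cref{thm:pwpca:cov} identifies their common spectrum with the principal variances. For the forward direction of \textbf{C2}, I would substitute the eigendecomposition $V_f \operatorname{diag}(\lambda_f) V_f^\top$ of $\operatorname{Cov}(\bar f)_{1:n,1:n}$ into the similarity relation above to produce the alternative eigendecomposition $(Q^\top V_f) \operatorname{diag}(\lambda_f) (Q^\top V_f)^\top$ of $\operatorname{Cov}(\bar g)_{1:n,1:n}$. Here \cref{cond:simple:eigvals} does real work: because the eigenvalues are distinct, the orthogonal eigendecomposition of a symmetric matrix is unique up to sign flips of individual columns, which forces $V_g = Q^\top V_f \operatorname{diag}(\sigma)$ for some $\sigma \in \pm(n)$. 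Substituting into $\hat g(x) = g(V_g x) = f(Q V_g x)$ collapses the right-hand side to $f(V_f \operatorname{diag}(\sigma) x) = \hat f(\operatorname{diag}(\sigma) x) = (\sigma \bullet \hat f)(x)$, establishing $\hat f \sim_{\pm(n)} \hat g$.

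For \textbf{C3}, which simultaneously supplies the reverse direction of C2, I would take the given $\sigma$ and set $R := V_f \operatorname{diag}(\sigma) V_g^\top$. Orthogonality is a two-line check from $V_f, V_g \in O(n)$ and $\operatorname{diag}(\sigma)^2 = I_n$. The identity $R \bullet f = g$ then reduces to a chain of unfoldings using $\hat f(y) = f(V_f y)$, $\hat g(y) = g(V_g y)$, and the hypothesis $\sigma \bullet \hat f = \hat g$:
\[
f(Rx) = \hat f(\operatorname{diag}(\sigma) V_g^\top x) = (\sigma \bullet \hat f)(V_g^\top x) = \hat g(V_g^\top x) = g(V_g V_g^\top x) = g(x).
\]

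The mathematical content is modest; the main care is in the bookkeeping around the homogenization so that the equivariance lemma remains clean. The one genuinely load-bearing ingredient is the distinctness of the principal variances in \cref{cond:simple:eigvals}: without it, the columns of $V_f$ and $V_g$ could differ by rotations within degenerate eigenspaces, and the a priori $O(n)$ indeterminacy in the eigendecomposition would not collapse down to the finite group $\pm(n)$, causing the forward direction of C2 to fail.
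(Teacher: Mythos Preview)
Your proposal is correct and follows essentially the same route as the paper: the paper packages your equivariance observation, homogenization compatibility, and block extraction as separate lemmas (\cref{thm:equiv:pwvarcov}, \cref{lem:hrf=hrhf}, \cref{lem:cov(Rf)=Rcov(f)}) and records the resulting statement that $(\lambda,V)$ is a PW-PCA of $f$ iff $(\lambda,R^\top V)$ is one of $R\bullet f$ as \cref{thm:equiv:pwaxes}, but the logical flow and computations for C1--C3 are identical to yours. Your emphasis on \cref{cond:simple:eigvals} as the load-bearing ingredient matches exactly how the paper invokes \cref{thm:unique:pwaxes}.
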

Before proceeding to the proof of Theorem \ref{thm:ortho:equiv} we collect several important observations in Remark \ref{remark:orthEq} below.
\begin{remark}\label{remark:orthEq}
\strut
\begin{itemize}
  \item[{\bf C1}:] This claim provides a partial check on  orthogonal equivalence  between $f$ and $g$.  Taking the contrapositive, it says  that if $\lambda_{f} \neq \lambda_{g}$ then
 $f$ and $g$ are \emph{not} orthogonally equivalent. 
 
Since $\lambda$'s lie in a real algebraic extension of the coefficient field of $f$ and $g$, the check involves comparing real algebraic numbers.  As usual one can avoid working with real algebraic numbers by instead comparing the coefficients of the characteristic polynomials of the underlying covariant matrices.
 
  \item[{\bf C2}:] This claim provides a complete check on orthogonal equivalence between $f$ and $g$.
 If one  tries to check $f \sim_{O(n)} g$ using only  the definition, 
  then  it  would require   searching for $R$ 
  in the \emph{infinite}  set $O(n)$ such that $R\bullet f =g$.
  Of course, it is algorithmically impossible.
 This  claim tells us that we can check instead   
$\hat{f} \sim_{\pm(n)} \hat{g}$, which would require 
searching for $\sigma$ in the \emph{finite} set $\pm(n)$ 
such that $ \sigma \bullet \hat{f} = \hat{g}  $.
Of course, it is algorithmically possible.
Thus this claim can be viewed as reducing the infinite search space $O(n)$ to the finite
search space $\pm(n)$. 

This claim can be also viewed as stating the following:   for every $h\in \mathbb{R}[x_1,\ldots,x_n]$, the polynomial   $\hat{h}$ is   the \emph{canonical form} of the orbit of $h$ upto sign and
that   $V_{h}$ is
the \emph{canonical simplifier}, that is, an element of $O(n)$ that acts on $h$ to transform it to its canonical form $\hat{h}$.
  
  \item[{\bf C3}:] This claim shows a simple way to  find  an $R \in O(n)$   such that $R \bullet f = g$ when$ \hat{f} \sim_{\pm(n)} \hat{g}$:  Find $\sigma$ such that $\sigma\bullet\hat{f} = \hat{g}$. Then we have $R=V_{f} \operatorname{diag}(\sigma)
V_{g}^{\top}$.
\end{itemize}
\end{remark}

\bigskip

The remainder of this section will be dedicated to the proof of the key theorem (Theorem~\ref{thm:ortho:equiv}).
The proof of this theorem is split into several smaller lemmas, namely Lemmas \ref{thm:unique:pwaxes}, \ref{lem:hrf=hrhf}, \ref{thm:equiv:pwvarcov}, \ref{lem:cov(Rf)=Rcov(f)}, and \ref{thm:equiv:pwaxes},  which readers may also find meaningful on their own. 
Before proving the lemmas, we will provide a bird's eye view 
of how the lemmas are used, showing their top-down dependencies. 
\begin{itemize}
\item The proof of Theorem~\ref{thm:ortho:equiv} uses 
      Lemma~\ref{thm:unique:pwaxes} and Lemma ~\ref{thm:equiv:pwaxes}.
\item The proof of Lemma~\ref{thm:equiv:pwaxes} 
      uses Lemma~\ref{lem:cov(Rf)=Rcov(f)}.
\item The proof of Lemma~\ref{lem:cov(Rf)=Rcov(f)} uses 
       Lemma~\ref{lem:hrf=hrhf} and Lemma~\ref{thm:equiv:pwvarcov}. 
\end{itemize}

\bigskip

Now let us begin with Lemma~\ref{thm:unique:pwaxes}.
It states that, for almost all $f$, its principal variances  are unique and its principal
axes are unique up to signflipping.
 
\begin{lemma}
[Uniqueness of PW-PCA up to signflips]\label{thm:unique:pwaxes}
Suppose $f\in\mathbb{R}[x_{1},\dots,x_{n}]$ satisfies the generic \cref{cond:simple:eigvals}.
If $(\lambda_{1},V_{1})$ and $(\lambda_{2},V_{2})$
are two PW-PCA's of $f$, then
\[
\lambda_{1}=\lambda_{2}\text{\; \; \; and \; \; \; }\exists_{\sigma\in\pm(n)}%
\ \ V_{2}=V_{1}\operatorname*{diag}\sigma.
\]
\end{lemma}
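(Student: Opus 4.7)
The plan is to reduce both conclusions of the lemma to standard uniqueness properties of the orthogonal eigendecomposition of a symmetric matrix with a simple spectrum. By \cref{thm:pwpca:cov}, if $(\lambda_i, V_i)$ is any PW-PCA of $f$ then $V_i \operatorname{diag}(\lambda_i) V_i^\top$ is an orthogonal eigendecomposition of the \emph{same} symmetric matrix $C := \operatorname{Cov}(\overline{f})_{1:n,1:n}$, in which the eigenvalues are sorted in non-increasing order. Thus both PW-PCA's $(\lambda_1, V_1)$ and $(\lambda_2, V_2)$ correspond to orthogonal eigendecompositions of a single common matrix.

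For the first conclusion, I would observe that the multiset of eigenvalues of $C$ (i.e., the roots of its characteristic polynomial, counted with multiplicity) is uniquely determined by $C$ itself. Since both $\lambda_1$ and $\lambda_2$ list these eigenvalues in non-increasing order, they must coincide as vectors in $\mathbb{R}^n$, giving $\lambda_1 = \lambda_2 =: \lambda$.

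For the second conclusion, I would invoke \cref{cond:simple:eigvals}: the principal variances are strictly decreasing, so each entry of $\lambda$ is a \emph{simple} eigenvalue of $C$ and its associated eigenspace is $1$-dimensional. For each $i \in \{1,\dots,n\}$, the $i$-th columns of $V_1$ and $V_2$ are then unit vectors lying in this common $1$-dimensional real subspace and therefore can only differ by a scalar of absolute value one, i.e., by a sign $\sigma_i \in \{-1, +1\}$. Collecting these signs into $\sigma = (\sigma_1, \dots, \sigma_n) \in \pm(n)$ yields $V_2 = V_1 \operatorname{diag}(\sigma)$, as desired.

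There is no real technical obstacle here; the crux is simply that distinct eigenvalues force $1$-dimensional eigenspaces, in which unit real eigenvectors are unique up to sign. The reason \cref{cond:simple:eigvals} is genuinely needed rather than merely convenient is precisely that, without it, two or more principal variances could coincide and their joint eigenspace would admit nontrivial orthogonal transformations beyond sign flips, so the stated uniqueness would fail; the assumption is invoked at exactly this point to rule out such coincidences.
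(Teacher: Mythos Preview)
Your proof is correct and follows essentially the same approach as the paper: both argue that the two PW-PCA's give orthogonal eigendecompositions of the same matrix $C=\operatorname{Cov}(\overline{f})_{1:n,1:n}$, so the sorted eigenvalue vectors agree, and then use the distinct-eigenvalue assumption to conclude that each eigenspace is one-dimensional, forcing the unit eigenvectors to agree up to sign. Your write-up is slightly more explicit in invoking \cref{thm:pwpca:cov} and spelling out why the $\lambda_i$ coincide, but the underlying argument is identical.
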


\begin{proof}
[Proof of \cref{thm:unique:pwaxes}] From the definition of PW-PCA, it is immediate that $\lambda_1 = \lambda_2$. Since $f$ satisfies the generic assumption, the eigenvalues of $C_f$ are distinct. Thus for each eigenvalue, the corresponding eigenspace has dimension one. Hence the eigenvector is unique up to sign. 
Thus, there exist some sign-flips $\sigma\in\{-1,1\}^{n}$ so that $V_{2}%
=V_{1}\operatorname*{diag}\sigma.$
\end{proof}

\bigskip

\noindent

 The following  four lemmas address a    natural  question: How do the following objects behave under the orthogonal group action?
 
\begin{itemize}
\item[] $\quad \bullet$ Homogenization (Lemma~\ref{lem:hrf=hrhf})
\item[] $\quad \bullet$ PW-covariance of homogeneous polynomials  (Lemma~\ref{thm:equiv:pwvarcov})
\item[] $\quad \bullet$ PW-covariance of homogenized polynomials (Lemma~\ref{lem:cov(Rf)=Rcov(f)})
\item[] $\quad \bullet$ PW-PCA (Lemma~\ref{thm:equiv:pwaxes})
\end{itemize}
The lemmas  answer that  each  of them behaves \emph{equivariantly} under the orthogonal group action.
\begin{lemma}[Orthogonal equivariance of homogenization]
\label{lem:hrf=hrhf}Let $f\in\mathbb{R}%
[x_{1},\dots,x_{n}]$ and $R\in O\left(  n\right)  $.
Then we have
\[
\overline{R\bullet f}\;\;=\;\;\overline{R}\bullet\overline{f}
\quad \text{where} \quad
\overline{R}=%
\begin{bmatrix}
R & \\
& 1
\end{bmatrix}
\in O(n+1).
\]

\end{lemma}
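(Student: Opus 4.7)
The plan is a direct unfolding of definitions on both sides and then a comparison. First, I would observe that $\deg(R\bullet f) = \deg(f)$, since the action $R\bullet f$ is just a linear change of variables and therefore preserves the total degree. Let $d = \deg(f)$.

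Next, I would compute the left-hand side. By definition of homogenization,
\[
\overline{R\bullet f}(x_1,\dots,x_{n+1}) = x_{n+1}^{d}\,(R\bullet f)\!\left(\tfrac{x_1}{x_{n+1}},\dots,\tfrac{x_n}{x_{n+1}}\right) = x_{n+1}^{d}\, f\!\left(R\cdot\tfrac{1}{x_{n+1}}(x_1,\dots,x_n)^\top\right),
\]
where the last step uses the definition of the $O(n)$-action.

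Then I would compute the right-hand side. Writing $y = \overline{R}(x_1,\dots,x_{n+1})^\top$, the block-diagonal form of $\overline{R}$ gives $(y_1,\dots,y_n)^\top = R(x_1,\dots,x_n)^\top$ and $y_{n+1} = x_{n+1}$. Thus
\[
(\overline{R}\bullet\overline{f})(x_1,\dots,x_{n+1}) = \overline{f}(y) = y_{n+1}^{d}\, f\!\left(\tfrac{y_1}{y_{n+1}},\dots,\tfrac{y_n}{y_{n+1}}\right) = x_{n+1}^{d}\, f\!\left(\tfrac{1}{x_{n+1}}\,R(x_1,\dots,x_n)^\top\right).
\]
Pulling the scalar $1/x_{n+1}$ inside using linearity of $R$, the two expressions match.

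There is really no obstacle here: orthogonality of $R$ is not even used in the computation (it is only needed so that $\overline{R}$ lies in $O(n+1)$). The only mild subtlety is the bookkeeping of which coordinates $\overline{R}$ acts on and confirming that the homogenizing variable $x_{n+1}$ is fixed by $\overline{R}$ so that $y_{n+1} = x_{n+1}$; once this is observed, the identity is a routine check. I would present the proof as two short displayed computations followed by a one-line comparison.
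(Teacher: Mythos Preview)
Your proposal is correct and follows essentially the same approach as the paper: both unfold the definitions of homogenization and the group action, use that $\overline{R}$ fixes the homogenizing coordinate, and invoke linearity of $R$ to pull the scalar $1/x_{n+1}$ through. The only cosmetic difference is that the paper writes a single chain of equalities from $\overline{R\bullet f}$ to $\overline{R}\bullet\overline{f}$, whereas you compute each side separately and compare; the content is identical.
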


\begin{proof}
Let $d=\deg f$. Then $d=\deg g$. Note
\begin{align*}
\left(  \overline{R\bullet f}\right)  (\overline{x})  &  =\left(  \overline
{R\bullet f}\right)  (x,x_{n+1})&&\text{where\ }\overline{x}%
=(x,x_{n+1})\text{ }\\
&  =\left(  R\bullet f\right)  (x/x_{n+1})x_{n+1}^{d}&&\text{from
the definition of homogenization (\ref{eq:homo})}\\
&  =f(R(x/x_{n+1}))x_{n+1}^{d}&&\text{since }g=R\bullet f\\
&  =f((Rx)/x_{n+1})x_{n+1}^{d}&&\text{since }R\left(  x/x_{n+1}%
\right)  =\left(  Rx\right)  /x_{n}\\
&  =\overline{f}(Rx,x_{n+1})&&\text{from the definition of homogenization
(\ref{eq:homo})}\\
&  =\overline{f}(\overline{R}\overline{x})&&\text{from the definition of }\overline{R}\\
&  =\overline{R}\bullet\overline{f}\left(  \overline{x}\right)  &&\text{since }%
\overline{R}\in O\left(  n+1\right).
\end{align*}
Note that the above equality holds for every $\overline{x}$. Thus we have
\[
\overline{R\bullet f}\;\;=\;\;\overline{R}\bullet\overline{f}.
\]
We have proved the lemma.
\end{proof}

We now define the action of an element $R$ of $O(n)$ on a symmetric matrix.

\begin{definition}[Orthogonal action on a symmetric matrix]\label{def:RonM}
The \emph{action} of $R\in O\left(  n\right)  $ on
 a symmetric $M\in \mathbb{R}^{n\times n}$, denoted as $R\bullet M$, is defined by
\[
R\bullet M \;=\; R^{\top} M R.
\]
This is the standard definition when  $M$ is viewed as a representation of  a quadratic  form.
\end{definition}
We will employ the definition above in the statement and proof of the Lemma below. Recall that for a homogeneous polynomial $f$ the polynomial weighted covariance, ${\rm Cov}(f)$, is a real symmetric matrix. 

\begin{lemma}
[Orthogonal equivariance of PW-covariance of homogeneous polynomials] Let $f\in
\mathbb{R}[x_{1},\dots,x_{n}]$ be \label{thm:equiv:pwvarcov}a homogeneous
polynomial and let $R\in O\left(  n\right)  $. Then we have%
\[
\operatorname{Cov}(R\bullet f)\ \ =\ \ R \bullet \operatorname{Cov}(f).
\]
\end{lemma}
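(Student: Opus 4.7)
The plan is to directly unpack the defining integral of $\operatorname{Cov}$ and perform a change of variables using that $R$ is an orthogonal (hence measure-preserving) map of the sphere to itself. Specifically, I would start from
\[
\operatorname{Cov}(R\bullet f)
= \int_{\mathbb{S}^{n-1}} (R\bullet f)(x)^2\, xx^\top\, d\mu(x)
= \int_{\mathbb{S}^{n-1}} f(Rx)^2\, xx^\top\, d\mu(x),
\]
using the definition of the action from \cref{def:O(n)} and of the polynomial-weighted covariance from \cref{def:pwcov}.

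Next I would apply the substitution $y = Rx$, so that $x = R^\top y$. Since $R\in O(n)$, the map $x\mapsto Rx$ is a bijection of $\mathbb{S}^{n-1}$ onto itself whose Jacobian has absolute value one, so the pushforward of $\mu$ under this map equals $\mu$ itself. Hence the integral transforms to
\[
\int_{\mathbb{S}^{n-1}} f(y)^2\, (R^\top y)(R^\top y)^\top\, d\mu(y)
= \int_{\mathbb{S}^{n-1}} f(y)^2\, R^\top y y^\top R\, d\mu(y).
\]

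Finally, pulling the constant matrices $R^\top$ and $R$ outside of the integral yields
\[
R^\top\!\left(\int_{\mathbb{S}^{n-1}} f(y)^2\, yy^\top\, d\mu(y)\right)\! R
= R^\top \operatorname{Cov}(f)\, R
= R \bullet \operatorname{Cov}(f),
\]
where the last equality is by \cref{def:RonM}. The only nontrivial ingredient is the rotation-invariance of the spherical measure $\mu$, which is a standard fact; everything else is linear-algebraic bookkeeping. I do not anticipate any real obstacle, since homogeneity of $f$ is not even needed for the identity itself (it is needed only to ensure that $\operatorname{Cov}(f)$ was defined in the first place via \cref{def:pwcov}).
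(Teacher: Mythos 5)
Your proof is correct and uses essentially the same idea as the paper's: change of variables $y=Rx$ together with the rotation-invariance of the spherical measure. The paper arrives at the same conclusion in a slightly different order (inserting $R^\top R = I_n$ and re-associating before the change of variables), but this is a cosmetic difference, not a different argument.
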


\begin{proof}
[Proof of \cref{thm:equiv:pwvarcov}]Note%
\begin{align*}
\operatorname{Cov}(R\bullet f)  &  =\int_{\mathbb{S}^{n-1}}f(Rx)^{2}%
\,xx^{\top}\;d\mu(x)&&\text{from Definition\ \ref{def:pwcov}}\\
&  =\int_{\mathbb{S}^{n-1}}f(Rx)^{2}\,(R^{\top}R)xx^{\top}(R^{\top}%
R)\;d\mu(x)&&\text{since }R^{\top}R=I\\
&  =\int_{\mathbb{S}^{n-1}}f(Rx)^{2}\,R^{\top}(Rx)(Rx)^{\top}R\;d\mu
(x)&&\text{by re-associating}\\
&  =R^{\top}\left(  \int_{\mathbb{S}^{n-1}}f(Rx)^{2}\,(Rx)(Rx)^{\top}%
\;d\mu(x)\right)  R&&\text{since }R\text{ does not depend on }x\\
&  =R^{\top}\left(  \int_{R\mathbb{S}^{n-1}}f(y)^{2}\,yy^{\top}\;d\mu(R^{\top
}y)\right)  R&&\text{by the change of variable }y=Rx\\
&  =R^{\top}\left(  \int_{\mathbb{S}^{n-1}}f(y)^{2}\,yy^{\top}\;d\mu
(y)\right)  R&&\text{since }d\mu(R^{\top}y)=d\mu(y)\\
&  =R^{\top}\operatorname{Cov}(f)R.&&\text{from
Definition\ \ref{def:pwcov}} \\
&  = R \bullet \operatorname{Cov}(f)  &&\text{from
Definition\ \ref{def:RonM}}.
\end{align*}
We have proved the lemma.

\end{proof}
 The next lemma is analogous to Lemma \ref{thm:equiv:pwvarcov} above except for the case where we start with a non-homogenous polynomial and homogenize. 
\begin{lemma}[Orthogonal equivariance of  PW-covariance of homogenized polynomials]
\label{lem:cov(Rf)=Rcov(f)}Let $f\in
\mathbb{R}[x_{1},\dots,x_{n}]$ and $R\in~O\left(  n\right)  $. Then we have%
\[
\operatorname{Cov}(\overline{R\bullet f})_{1:n,1:n}\;\;\;\;=\;\;\;\;R \bullet \operatorname{Cov}
(\overline{f})_{1:n,1:n}
\]

\end{lemma}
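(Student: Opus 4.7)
\textbf{Proof plan for \cref{lem:cov(Rf)=Rcov(f)}.}
The plan is to reduce to the already-proved homogeneous case by lifting everything to $n+1$ variables, and then to observe that the special block structure of $\overline{R}$ makes the leading $n \times n$ principal submatrix transform exactly by $R$.

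First, I would use \cref{lem:hrf=hrhf} to rewrite $\overline{R\bullet f}$ as $\overline{R}\bullet \overline{f}$. Since $\overline{f}\in\mathbb{R}[x_1,\dots,x_{n+1}]$ is a homogeneous polynomial (by construction of the homogenization) and $\overline{R}\in O(n+1)$, I can then apply \cref{thm:equiv:pwvarcov} in dimension $n+1$ to conclude
\[
\operatorname{Cov}(\overline{R\bullet f})
\;=\;\operatorname{Cov}(\overline{R}\bullet\overline{f})
\;=\;\overline{R}^{\top}\operatorname{Cov}(\overline{f})\,\overline{R}.
\]

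The remaining step is purely a block-matrix computation: extract the leading $n\times n$ principal submatrix. Writing
\[
\operatorname{Cov}(\overline{f})=
\begin{bmatrix}
\operatorname{Cov}(\overline{f})_{1:n,1:n} & \operatorname{Cov}(\overline{f})_{1:n,n+1}\\
\operatorname{Cov}(\overline{f})_{n+1,1:n} & \operatorname{Cov}(\overline{f})_{n+1,n+1}
\end{bmatrix},
\qquad
\overline{R}=\begin{bmatrix} R & 0 \\ 0 & 1 \end{bmatrix},
\]
a direct block multiplication shows that the $(1\!:\!n,1\!:\!n)$ block of $\overline{R}^{\top}\operatorname{Cov}(\overline{f})\,\overline{R}$ is exactly $R^{\top}\operatorname{Cov}(\overline{f})_{1:n,1:n}R$, which by \cref{def:RonM} equals $R\bullet\operatorname{Cov}(\overline{f})_{1:n,1:n}$. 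Combining with the previous display gives the claim.

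I do not expect any real obstacle here: the two ingredients (\cref{lem:hrf=hrhf} and \cref{thm:equiv:pwvarcov}) are essentially set up for this exact purpose. The only subtle point worth verifying carefully is that the off-diagonal blocks of $\overline{R}$ vanish, so that conjugation by $\overline{R}$ does not mix the $(1\!:\!n)$ rows/columns with the $(n{+}1)$-st row/column --- this is precisely what guarantees that the transformation on the leading principal submatrix closes up as $R \bullet (\cdot)$ rather than involving the boundary entries $\operatorname{Cov}(\overline{f})_{1:n,n+1}$ and $\operatorname{Cov}(\overline{f})_{n+1,n+1}$.
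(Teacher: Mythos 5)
Your proof is correct and follows essentially the same route as the paper's: first invoking \cref{lem:hrf=hrhf} to identify $\overline{R\bullet f}$ with $\overline{R}\bullet\overline{f}$, then applying \cref{thm:equiv:pwvarcov} in dimension $n+1$, and finally extracting the leading $n\times n$ block via the block-diagonal structure of $\overline{R}$. No further comment is needed.
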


\begin{proof}
Note%
\begin{align*}
\operatorname{Cov}(\overline{R\bullet f})_{{}} &  =\operatorname{Cov}(\overline{R}\bullet\overline{f})&&\text{from Lemma \ref{lem:hrf=hrhf}}\\
&  =\overline{R}^{\top}\operatorname{Cov}(\overline{f})\overline{R}&&\text{from
Lemma \ref{thm:equiv:pwvarcov}}\\
&  =\left[
\begin{array}
[c]{ll}%
R & \\
& 1
\end{array}
\right]  ^{\top}\left[
\begin{array}
[c]{ll}%
\operatorname{Cov}(\overline{f})_{1:n,1:n} & \operatorname{Cov}(\overline{f}%
)_{1:n,n+1}\\
\operatorname{Cov}(\overline{f})_{n+1,1:n} & \operatorname{Cov}(\overline{f})_{n+1,n+1}%
\end{array}
\right]  \left[
\begin{array}
[c]{ll}%
R & \\
& 1
\end{array}
\right]  &&\text{by showing the parts}\\
&  =\left[
\begin{array}
[c]{rr}%
R^{\top}\operatorname{Cov}(\overline{f})_{1:n,1:n}\;\;R & R^{\top}\operatorname{Cov}(\overline{f})_{1:n,n+1}\;\,\\ 
\operatorname{Cov}(\overline{f})_{n+1,1:n}R & \operatorname{Cov}(\overline{f})_{n+1,n+1}%
\end{array}
\right]  &&\text{by multiplying.}%
\end{align*}
By equating the top-right parts, we have
\[
\operatorname{Cov}(\overline{R\bullet f})_{1:n,1:n}\;\;\;=\;\;\;R^{\top}\operatorname{Cov}%
(\overline{f})_{1:n,1:n}\;R \;\;\;=\;\;\; R \bullet \operatorname{Cov}(\overline{f})_{1:n,1:n}.
\]
We have proved the lemma.
\end{proof}

We now prove the final lemma which will establish the orthogonal equivariance of PW-PCA  needed to obtain a proof for Theorem \ref{thm:ortho:equiv}.
\begin{lemma}
[Orthogonal equivariance of PW-PCA]\label{thm:equiv:pwaxes}Let $f\in
\mathbb{R}[x_{1},\dots,x_{n}]$ and $R\in O\left(n\right)$. Then the followings are equivalent.

\begin{enumerate}
\item $(\lambda,V)$ is a PW-PCA of $f$.

\item $(\lambda,R^{\top}V)$ is a PW-PCA of $R\bullet f$.
\end{enumerate}
\end{lemma}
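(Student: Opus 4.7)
The plan is to reduce the statement to a straightforward linear-algebra identity using \cref{thm:pwpca:cov} and \cref{lem:cov(Rf)=Rcov(f)}. First, I would invoke \cref{thm:pwpca:cov} to translate the PW-PCA statements into statements about orthogonal eigendecompositions: $(\lambda, V)$ is a PW-PCA of $f$ iff $V \operatorname{diag}(\lambda) V^\top$ is an orthogonal eigendecomposition of $C_f \coloneqq \operatorname{Cov}(\overline{f})_{1:n,1:n}$ (with $\lambda_1 \geq \cdots \geq \lambda_n$), and similarly $(\lambda, R^\top V)$ is a PW-PCA of $R \bullet f$ iff $(R^\top V) \operatorname{diag}(\lambda) (R^\top V)^\top$ is an orthogonal eigendecomposition of $C_{R\bullet f} \coloneqq \operatorname{Cov}(\overline{R \bullet f})_{1:n,1:n}$.

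Next, I would note that since both $R$ and $V$ are orthogonal matrices, their product $R^\top V$ is also orthogonal, so the orthogonality requirement on the eigenbasis is automatically preserved when replacing $V$ by $R^\top V$. The sorting requirement on $\lambda$ is identical in both statements, so that also carries over for free.

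The heart of the argument is then to connect $C_f$ and $C_{R \bullet f}$, which is exactly what \cref{lem:cov(Rf)=Rcov(f)} provides: $C_{R \bullet f} = R^\top C_f R$. Using this, I would compute
\begin{equation*}
    (R^\top V) \operatorname{diag}(\lambda) (R^\top V)^\top
    = R^\top V \operatorname{diag}(\lambda) V^\top R,
\end{equation*}
so the identity $(R^\top V) \operatorname{diag}(\lambda) (R^\top V)^\top = C_{R \bullet f}$ is equivalent to $R^\top V \operatorname{diag}(\lambda) V^\top R = R^\top C_f R$. Left-multiplying by $R$ and right-multiplying by $R^\top$ (using $R R^\top = R^\top R = I_n$) shows this is equivalent to $V \operatorname{diag}(\lambda) V^\top = C_f$, yielding the equivalence of the two PW-PCA statements.

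I do not expect any real obstacle here: once the earlier results have been assembled, the proof is just the observation that conjugation by $R$ is a bijection on symmetric matrices that transports eigendecompositions of $C_f$ to eigendecompositions of $R^\top C_f R$ in the obvious way.
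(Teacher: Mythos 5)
Your proposal is correct and follows essentially the same approach as the paper: translate both PW-PCA statements into eigendecompositions via \cref{thm:pwpca:cov}, conjugate by the orthogonal matrix $R$ using $C_{R\bullet f} = R^\top C_f R$ from \cref{lem:cov(Rf)=Rcov(f)}, and observe that this conjugation carries one eigendecomposition to the other. The only difference is cosmetic: you explicitly flag that $R^\top V$ remains orthogonal and that the sorting of $\lambda$ is unchanged, details the paper's chain of equivalences leaves implicit.
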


\begin{proof}
Suppose that $(\lambda,V)\ \text{is\ a\ PW-PCA\ of\ }f$. Then the following are equivilent:
\begin{align*}
&  (\lambda,V)\ \text{is\ a\ PW-PCA\ of\ }f\ \ \ \\
\Longleftrightarrow\ \ \  &  V\operatorname{diag}(\lambda)V^{\top
}=\operatorname{Cov}(\overline{f})_{1:n,1:n}&&\text{from Theorem
\ref{thm:pwpca:cov}}\\
\Longleftrightarrow\ \ \  &  R^{\top}V\operatorname{diag}(\lambda)V^{\top
}R=R^{\top}\operatorname{Cov}(\overline{f})_{1:n,1:n}R&&\text{since
}R\ \text{is invertible}\\
\Longleftrightarrow\ \ \  &  \left(  R^{\top}V\right)  \operatorname{diag}%
(\lambda)\left(  R^{\top}V\right)  ^{\top}=R^{\top}\operatorname{Cov}%
(\overline{f})_{1:n,1:n}R&&\text{since }V^{\top}R=\left(  R^{\top
}V\right)  ^{\top}\\
\Longleftrightarrow\ \ \  &  \left(  R^{\top}V\right)  \operatorname{diag}%
(\lambda)\left(  R^{\top}V\right)  ^{\top}=\operatorname{Cov}(\overline
{R\bullet f})_{1:n,1:n}&&\text{from Lemma \ref{lem:cov(Rf)=Rcov(f)}%
}\\
\Longleftrightarrow\ \ \  &  (\lambda,R^{\top}V)\ \text{is a PW-PCA of
}R\bullet f&&\text{from Theorem \ref{thm:pwpca:cov}.}%
\end{align*}
Thus we have proved the lemma.
\end{proof}

\bigskip

Now we are ready to prove Theorem~\ref{thm:ortho:equiv}, the key theorem of this section. 
The proof will crucially use  
the uniqueness of PW-PCA up to signflips (Lemma~\ref{thm:unique:pwaxes}) 
and orthogonal equivariance of PW-PCA (Lemma~\ref{thm:equiv:pwaxes}).

\begin{proof}
[Proof of \cref{thm:ortho:equiv}] Let $f, g \in\mathbb{R}[x_{1},\dots,x_{n}]$
have respective PW-PCA's $(\lambda_{f}, V_{f})$ and $(\lambda_{g}, V_{g})$,
where both~$f$ and~$g$ satisfy \cref{cond:simple:eigvals}. We prove each claim
one by one:

\begin{enumerate}
\item[\textrm{{\textbf{ C1.}}}] $f \sim_{O(n)} g \;\;\;\implies\;\;\;\lambda_{f} =
\lambda_{g}$.

Suppose that $f \sim_{O(n)} g$. Then there exists $R \in O(n)$ such that $R
\bullet f =g$.
It follows from \cref{thm:equiv:pwaxes} that $\lambda_{f}$
are then principal variances for $g$, and it finally follows from
\cref{thm:unique:pwaxes} that $\lambda_{f} = \lambda_{g}$, which completes the
proof of this claim.

\item[\textrm{{\textbf{C2.}}}] $f \sim_{O(n)} g \;\;\;\iff\;\;\;\hat{f} \sim_{\pm(n)}
\hat{g}$.

For the $\implies$ direction, suppose that $f \sim_{O(n)} g$. Then there
exists $R \in O(n)$ such that $R \bullet f = g$. It follows from
\cref{thm:equiv:pwaxes} that $R^{\top}V_{f}$ is then a set of principal axes
for $g$, and it next follows from \cref{thm:unique:pwaxes} that there exists
$\sigma\in\pm(n)$ such that $V_{g} = R^{\top}V_{f} \operatorname{diag}%
(\sigma)$. Thus, $R V_{g} = V_{f} \operatorname{diag}(\sigma)$ and so
\begin{align*}
(\sigma\bullet\hat{f})(x)  &  = \hat{f}(\operatorname{diag}(\sigma)
x) &&\text{by the definition of $\sigma\bullet\hat{f}$}\\
&  = f(V_{f} \operatorname{diag}(\sigma) x)&&\text{by the definition
of $\hat{f}$}\\
&  = f(R V_{g} x)&&\text{since $R V_{g} = V_{f} \operatorname{diag}%
(\sigma)$}\\
&  = g(V_{g} x)&&\text{since $R \bullet f = g$}\\
&  = \hat{g}(x)&&\text{by the definition of $\hat{g}$} .
\end{align*}
Hence, we have that $\hat{f} \sim_{\pm(n)} \hat{g}$, which completes the proof
of this direction.

For the $\impliedby$ direction, suppose that $\hat{f} \sim_{\pm(n)} \hat{g}$.
Then there exists $\sigma\in\pm(n)$ such that $\sigma\bullet\hat{f} = \hat{g}$
and so
\begin{align*}
g(x)  &  = g(V_{g} V_{g}^{\top}x)&&\text{since $V_{g} V_{g}^{\top}=
I_{n}$}\\
&  = \hat{g}(V_{g}^{\top}x)&&\text{by the definition of $\hat{g}$}\\
&  = \hat{f}(\operatorname{diag}(\sigma) V_{g}^{\top}x)&&\text{since
$\sigma\bullet\hat{f} = \hat{g}$}\\
&  = f(V_{f} \operatorname{diag}(\sigma) V_{g}^{\top}x)&&\text{by
the definition of $\hat{f}$}\\
&  = f(Rx)&&\text{by defining $R = V_{f} \operatorname{diag}(\sigma)
V_{g}^{\top}$}\\
&  = (R \bullet f)(x)&&\text{by the definition of $R \bullet f$} ,
\end{align*}
where $R \in O(n)$ since it is a product of orthogonal matrices. Hence, we
have that $f \sim_{O(n)} g$, which completes the proof of this direction.

\item[\textrm{{\textbf{C3.}}}] $\sigma\bullet\hat{f} = \hat{g} \;\;\;\implies\;\;\; R
\bullet f = g$ \;\;\;\;where $R = V_{f} \operatorname{diag}(\sigma) V_{g}^{\top}$.

Suppose that $\sigma\bullet\hat{f}=\hat{g}$. Then by similar arguments as in
the $\impliedby$ direction of {\bf C2}, we have
\[
(R\bullet f)(x)=f(Rx)=f(V_{f}\operatorname{diag}(\sigma)V_{g}^{\top}x)=\hat
{f}(\operatorname{diag}(\sigma)V_{g}^{\top}x)=\hat{g}(V_{g}^{\top}%
x)=g(V_{g}V_{g}^{\top}x)=g(x),
\]
and so $R\bullet f=g$, which completes the proof of this claim.
\end{enumerate}

\noindent We have proved the key theorem.
\end{proof}

\bigskip

The theory developed so far, in particular Theorem~\ref{thm:ortho:equiv}, immediately leads to 
the following algorithm for tacking the main problem of this paper (Problem~\ref{prb:problem}).

\begin{algorithm}
[\textsf{Certificate for Orthogonal Equivalence}]\label{alg:certificate}\mbox{}\medskip

{\bf Input:} $f,g\in\mathbb{R}[x_{1},\ldots,x_{n}]$ of degree $d$ such that
$n,d\geq2$ and $f\sim_{O(n)}g$\medskip

{\bf Output:} $R\in O(n)$ such that $R\bullet f=g$

\begin{enumerate}
\item $(\lambda_f,V_{f})\leftarrow$\textsf{PW\_PCA}$(f)$

      $(\lambda_g,V_{g})\leftarrow$\textsf{PW\_PCA}$(g)$

\item $\hat{f}\leftarrow V_f \bullet f$

$\hat{g}\leftarrow V_g \bullet g$

\item Find
$\sigma \in \{-1,1\}^n$ such that $\sigma \bullet \hat{f} = \hat
{g}$

\item $R\leftarrow V_{f}\operatorname*{diag}\sigma V_{g}^{\top}$
\end{enumerate}
\end{algorithm}

\medskip

We illustrate the above algorithm on the running  example. 
We executed the algorithm using double precision ($\sim$ 16 significant decimal digits) floating point arithmetic displaying only~3 decimal digits.

\begin{example}
[Running example continued]Again consider the two polynomials $f$ and $g$ of degree $d=3$ in $\RR[x_1, x_2, x_3]$ given below:\small
\begin{align*}
f&=-27 x_{1}^{3}+27 x_{2}^{2} x_{3}-9 x_{3},\\
g&=-12 x_{1}^{3}+12 x_{1}^{2} x_{2}-12 x_{1}^{2} x_{3}+6 x_{1} x_{2}^{2}+36
x_{1} x_{2} x_{3}-33 x_{1} x_{3}^{2}+9 x_{2}^{3}-6 x_{2}^{2} x_{3}+6 x_{2}
x_{3}^{2}-6 x_{3}^{3}+3 x_{1}-6 x_{2}-6 x_{3} .
\end{align*}\normalsize
We now apply Algorithm \ref{alg:certificate} with input $f$ and $g$ as above. From this procedure we obtain the element $R\in O(n)$ certifying the orthogonal equivalence of $f$ and $g$.

\begin{enumerate}

\item $\lambda_{f},V_{f}\leftarrow[ 808.346, \; 212.351, \; 170.966], \left[
\begin{array}
[c]{rrr}%
0.999 & 0.000 & 0.047\\
0.000 & - 1.000 & 0.000\\
- 0.047 & 0.000 & 0.999
\end{array}
\right]  $

$\lambda_{g},V_{g}\leftarrow[ 808.345,\; 212.351, \;170.966], \left[
\begin{array}
[c]{rrr}%
0.682 & 0.667 & - 0.302\\
- 0.364 & 0.667 & 0.650\\
0.635 & - 0.333 & 0.697
\end{array}
\right]  $

\item $\hat{f}\leftarrow- 26.910 x_{1}^{3} - 3.806 x_{1}^{2} x_{3} - 1.271
x_{1} x_{2}^{2} - 0.179 x_{1} x_{3}^{2} + 26.970 x_{2}^{2} x_{3} - 0.003
x_{3}^{3} + 0.424 x_{1} - 8.990 x_{3}  +{\rm terms \; with \; coefficients \; less\; than \;}10^{-3}$

$\hat{g}\leftarrow- 26.910 x_{1}^{3} - 3.806 x_{1}^{2} x_{3} - 1.271 x_{1}
x_{2}^{2} - 0.179 x_{1} x_{3}^{2} + 26.970 x_{2}^{2} x_{3} - 0.003 x_{3}^{3} +
0.424 x_{1} - 8.990 x_{3} +{\rm terms \; with \; coefficients \; less\; than \;}10^{-3}$

\item $\sigma\leftarrow[1, -1, 1]$

\item $R\leftarrow\left[
\begin{array}
[c]{rrr}%
0.667 & -0.333 & 0.667\\
0.667 & 0.667 & -0.333\\
-0.333 & 0.667 & 0.667
\end{array}
\right]  $
\end{enumerate}
We may easily check that $R$ is an element of $O(n)$ and that $f(Rx)=g(x)$, up to floating point arithmetic error, and hence $R$ is indeed the desired certificate. 
\end{example}

We conclude the section with a collection of remarks on the methods presented above. 
\begin{remark} \mbox{} Below are several observations regarding Algorithm \ref{alg:certificate}.
\label{remark:OrthEqImplment}
\begin{enumerate}[a)]
  \item Note that $\lambda_f$ and $\lambda_g$ from the {\rm PW-PCA} are not used in Algorithm \ref{alg:certificate} for finding a             certificate.
We chose to include them since they might be useful for tackling some other problems.
Observe that, since $f$ and $g$ are equivalent, Theorem~\ref{thm:ortho:equiv}, {\rm \bf C1}, ensures that $\lambda_f=\lambda_g$,
as can be seen from the numerical values for them in Step 1 of the above example.

\medskip

  \item Even if it is unnecessary, let us carry out a sanity check on the output of the above example since we executed the algorithm  using floating point arithmetic. Easy computation shows 
  \[\left\vert \left\vert R\bullet f-g\right\vert\right\vert = 2.035 \times 10^{-13}.\]
Recalling that we used floating points with ~16 significant decimal digits, we see that the error is negligibly small. In fact if one uses higher precision floating points arithmetic ($\sim$ 30 significant decimal digits), then the error becomes even smaller: $3.090 \times 10^{-27}$.

  \item  If the sanity check fails (that is, $||R \bullet f - g||$ is not negligible) then, according to the key theorem (Theorem~\ref{thm:ortho:equiv}), it is due to the one (or more) of the following:
  At least one of~$f$ and~$g$ does not satisfy the generic assumption (Assumption~\ref{cond:simple:eigvals}) or the user, by mistake, inputted~$f$ and~$g$ which are not orthogonally equivalent. Since it is highly unlikely that $f$ and $g$ do not satisfy the genericity condition (which can be checked by examining $\lambda_f$ and $\lambda_g$), one can conclude that it is highly likely that the user mistakenly inputted $f$ and $g$ which are not orthogonally equivalent. 
  
  \item Thus the above algorithm could be also used as an algorithm for checking the orthogonal equivalence of two arbitrary polynomials $f$ and $g$ under the generic condition, if all the computations were carried out exactly. In practice to carry out such exact computations we would need to work in algebraic number fields which can become computationally expensive in practice. 
  
\end{enumerate}
\end{remark}

\section{Performance of the proposed algorithm}
\label{sec:performance}

This section investigates the performance of the proposed algorithm through computer experiments.
Before we discuss the results, we first describe the overall setup that was used.

\paragraph{\underline{Setup for the experiments}}
\begin{itemize}
\item {\sf Program}: 
We implemented the proposed algorithm in the Julia programming language \cite{bezanson2017julia}
using the standard library and the following packages:
Nemo \cite{nemojl},
Combinatorics,
and
GenericLinearAlgebra.
Since our primary goal was to investigate the algorithmic aspects of the performance,
we used a mostly straightforward preliminary implementation of the algorithm.
For the baseline algorithm (described below), we used the AlgebraicSolving package,
which employs the highly-optimized msolve library \cite{berthomieu}.

We also used the following Julia packages to run the experiments and export the results as tables and figures:
CacheVariables,
CairoMakie \cite{DanischKrumbiegel2021},
Dictionaries,
LaTeXStrings,
LaTeXTabulars,
Pluto,
ProgressLogging,
StableRNGs, and
TerminalLoggers.

\item {\sf Benchmark}: For each pair $(n,d)$, we generated ten $(f,g)$ pairs as follows:
    \begin{enumerate}
      \item Generate $f \in \mathbb{Z}[x_1,\ldots,x_n]$ homogeneous of degree $d$ whose coefficients are randomly taken from the range $-100$ to $100$, omitting zero. This produces a dense polynomial.
      \item Generate an orthogonal $R \in \mathbb{Q}^{n \times n}$ using the Cayley transform~\cite{Cayley1846} as follows:
      Generate a random $n \times n$ antisymmetric matrix $S \in \mathbb{Z}^{n \times n}$ with entries between $-10$ and $10$ and a random sign vector $\sigma \in \{-1,1\}^n$.
      Then compute $R = (S-I_n)^{-1} (S+I_n) \operatorname{diag}(\sigma)$.
      \item Compute $g = R \bullet f \in \mathbb{Q}[x_1,\ldots,x_n]$.
   \end{enumerate}
   We generated $f$ and $g$ with exact rational coefficients because the baseline algorithm (described below) involves solving a highly over-determined system of polynomial equations, where even small numeric errors in the coefficients would likely almost always make the system inconsistent. Notably, the proposed algorithm (\cref{alg:certificate}) does \emph{not} involve solving an over-determined system and does not have this limitation. It can be safely run with floating point numbers.

\item {\sf Timing}: All timings were measured as wall-clock time, and we report median runtimes across the ten $(f,g)$ pairs.

\item {\sf Computer}: The experiments were run on a 2021 Macbook Pro with an M1 Max CPU (64GB of RAM).

\item {\sf Reproducible research}: Code to reproduce the experiments is available online at \url{https://gitlab.com/dahong/ortho-equiv-pwpca}.
\end{itemize}

\paragraph{\underline{Comparison with the baseline method}}\strut

\medskip

\noindent To the best of our knowledge, there is currently no publicly available software for finding certificates of orthogonal equivalence.
Thus, we implemented the following straightforward algorithm as a baseline method for comparison.

\medskip
\begin{algorithm}[Baseline]\label{alg:baseline}\mbox{}
\medskip

{\bf Input:} $f,g\in\mathbb{R}[x_{1},\ldots,x_{n}]$ of degree $d$ such that
$n,d\geq2$ and $f\sim_{O(n)}g$\medskip

{\bf Output:} $R\in O(n)$ such that $R\bullet f=g$

\begin{enumerate}
\item $M\leftarrow\left[
\begin{array}
[c]{lll}%
r_{11} & \cdots & r_{1n}\\
\vdots &  & \vdots\\
r_{n1} &  & r_{nn}%
\end{array}
\right]  \ \ $where $r_{ij}$ are indeterminates

\item $h\leftarrow M \bullet  f  -g \;\; \in\mathbb{R}%
\left[  r_{11},\ldots,r_{nn}\right]  \left[  x_{1},\ldots,x_{n}\right]  $

\item $H\leftarrow$the set of all the coefficients of $h$ and all the entries
of $M^{\top}M-I_n \subset \mathbb{R}%
\left[  r_{11},\ldots,r_{nn}\right]$ 
\item $S\leftarrow$ a real solution $\in \mathbb{R}^{n^2}$ of $H=0$

\item $R\leftarrow$ the matrix $\in \mathbb{R}^{n\times n}$ obtained by instantiating  $M$
      on $S$

\end{enumerate}
\end{algorithm}

\noindent The following table shows the median runtime for the proposed method and the baseline method.
Our runs of the baseline method for $n=5, d=10$ did not complete; they crashed on the first trial after running for around 154 minutes (9240 seconds).
We indicate this on the table as ``$>$9240''.

\begin{center}
    \begin{tabular}{rrr S[table-format=5.3] S[table-format=1.3] S[table-format=5.2]}
\toprule
$n$ & $d$ & \# terms & {Baseline Method (seconds)} & {Proposed Method (seconds)} & {Approx. Speedup} \\
\midrule
\multirow[t]{4}{*}{$3$} & $7$ & $36$ & 0.024 & 0.004 & 6.0x \\
 & $8$ & $45$ & 0.043 & 0.005 & 8.6x \\
 & $9$ & $55$ & 0.075 & 0.008 & 9.4x \\
 & $10$ & $66$ & 0.139 & 0.015 & 9.3x \\
\midrule
\multirow[t]{4}{*}{$4$} & $7$ & $120$ & 0.944 & 0.039 & 24.2x \\
 & $8$ & $165$ & 3.048 & 0.072 & 42.3x \\
 & $9$ & $220$ & 8.933 & 0.141 & 63.4x \\
 & $10$ & $286$ & 26.166 & 0.256 & 102.2x \\
\midrule
\multirow[t]{4}{*}{$5$} & $7$ & $330$ & 133.690 & 0.277 & 482.6x \\
 & $8$ & $495$ & 261.556 & 0.706 & 370.5x \\
 & $9$ & $715$ & 1334.248 & 1.746 & 764.2x \\
 & $10$ & $1001$ & > 9240 & 4.005 & > 2307.1x \\
\bottomrule
\end{tabular}

\end{center}

\noindent We make the following observations:
\begin{itemize}
\item The proposed method is much faster than the baseline method. In fact, for all three values of $n$, the proposed method on $d=10$ was faster than the baseline method on $d=7$, despite having to handle polynomials with many more terms.
\item The proposed method scales much more gradually than the baseline method with respect to both the number of variables $n$ and the degree $d$. As a result, the speedup generally grows with both $n$ and $d$.
\end{itemize}

\paragraph{\underline{Details on the performance of the  proposed algorithm}}\strut

\medskip

\noindent 
We conclude the paper with a detailed investigation of the performance of the proposed algorithm.
In particular, we assess its scalability, the amount of time spent on each step, and identify potential directions for future improvements.
For this purpose, recall that the proposed algorithm (\cref{alg:certificate}) consists of the following steps:
\begin{enumerate}
\item Compute the PW-PCA's of $f$ and $g$.
\item Compute the canonical forms $\hat{f}$ and $\hat{g}$.
\item Find a valid signflip $\sigma$.
\item Compute the certificate $R$.
\end{enumerate}

\medskip
\noindent The following two sets of graphs show the median amount of time spent by each step
on a set of axes with matching axis limits to make it easy to see how the method scales with respect to the number of variables $n$ and the degree $d$. Recall that these are dense polynomials.

\includegraphics[width=0.9\linewidth]{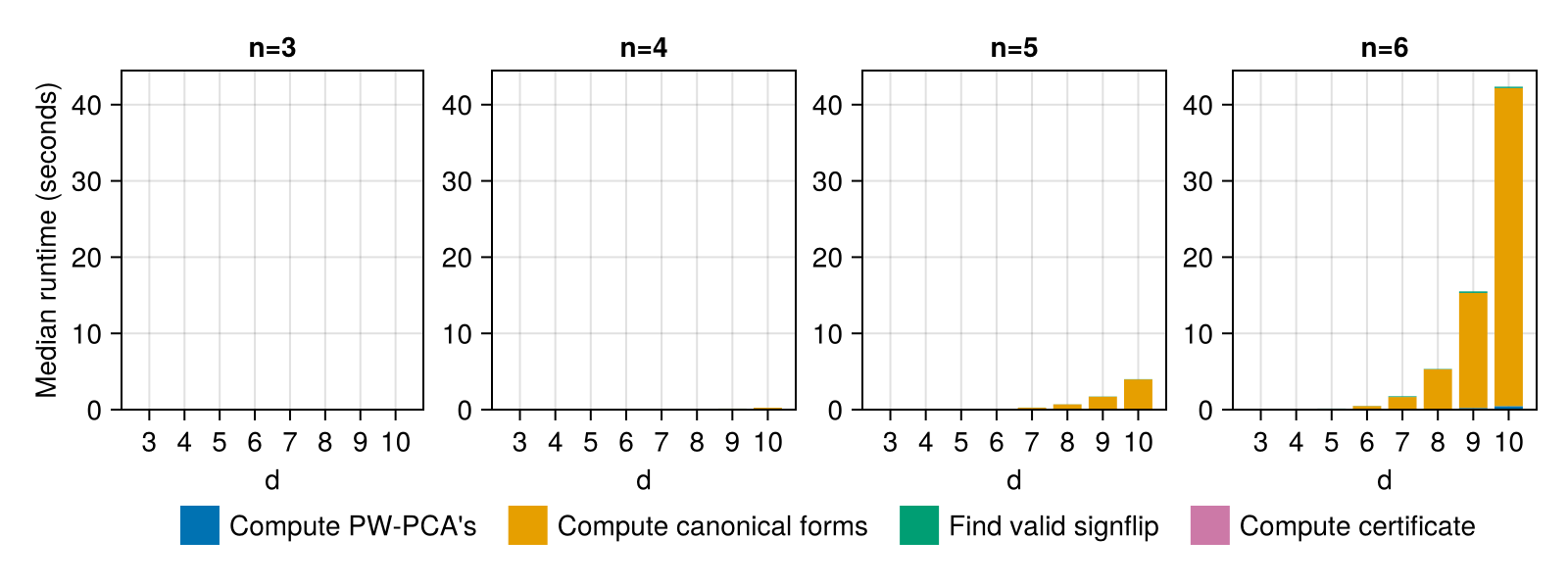}

\includegraphics[width=0.9\linewidth]{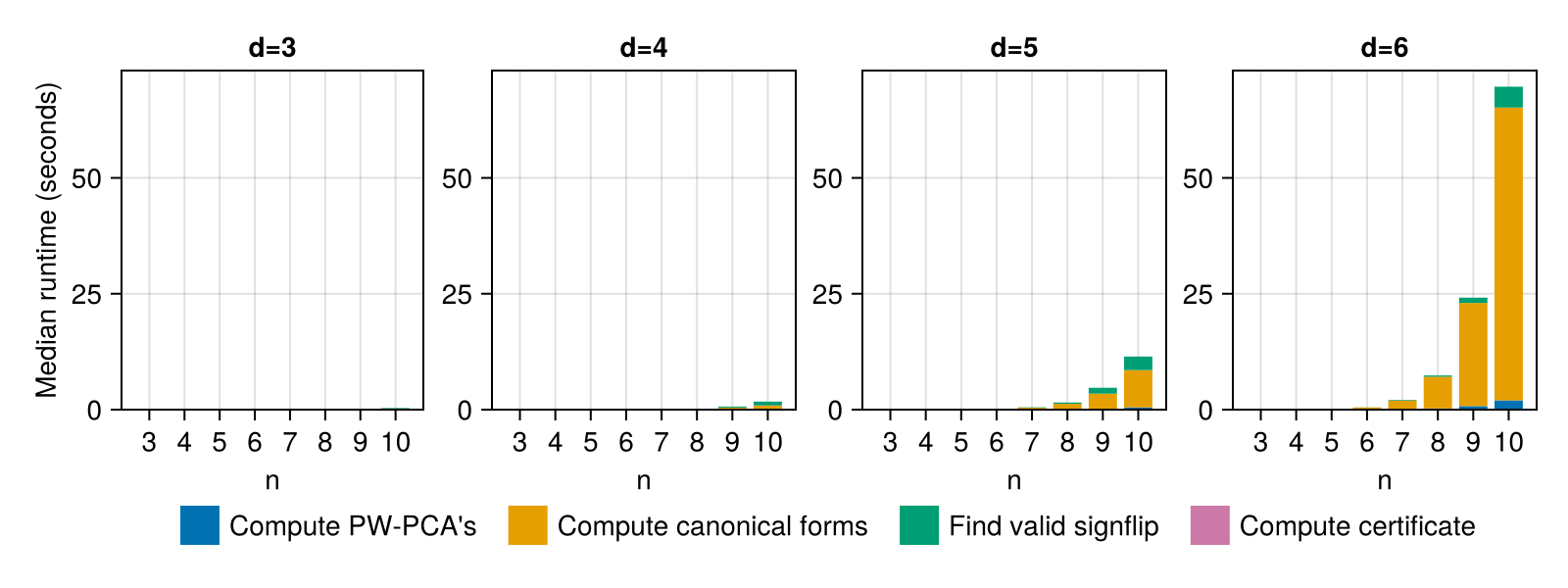}

\noindent We make the following observations:
\begin{itemize}
\item For small $n$ or small $d$, the graphs look essentially empty. This is because the time spent in these cases is so small relative to the larger cases that the bars are not visible.
\item The top row of graphs show that the proposed method scales well to moderate values of $d$ for a fixed small $n$. Likewise, the bottom row of graphs show that the proposed method scales well to moderate values of $n$ for a fixed small $d$. The growing runtimes in both cases appear to be largely driven by the growing cost of computing the canonical forms.
\item For large $n$ and $d$, the graphs in fact suggest that the runtime is essentially dominated by computing the canonical forms.
\end{itemize}
Overall, it seems that the primary barrier to further scalability of the approach is to improve the speed of computing the canonical forms.

\medskip

\noindent To more carefully examine the detailed breakdown of how much time is spent on each step, we plot the same timings again but with the axis limits now tuned for each plot individually.

\includegraphics[width=0.9\linewidth]{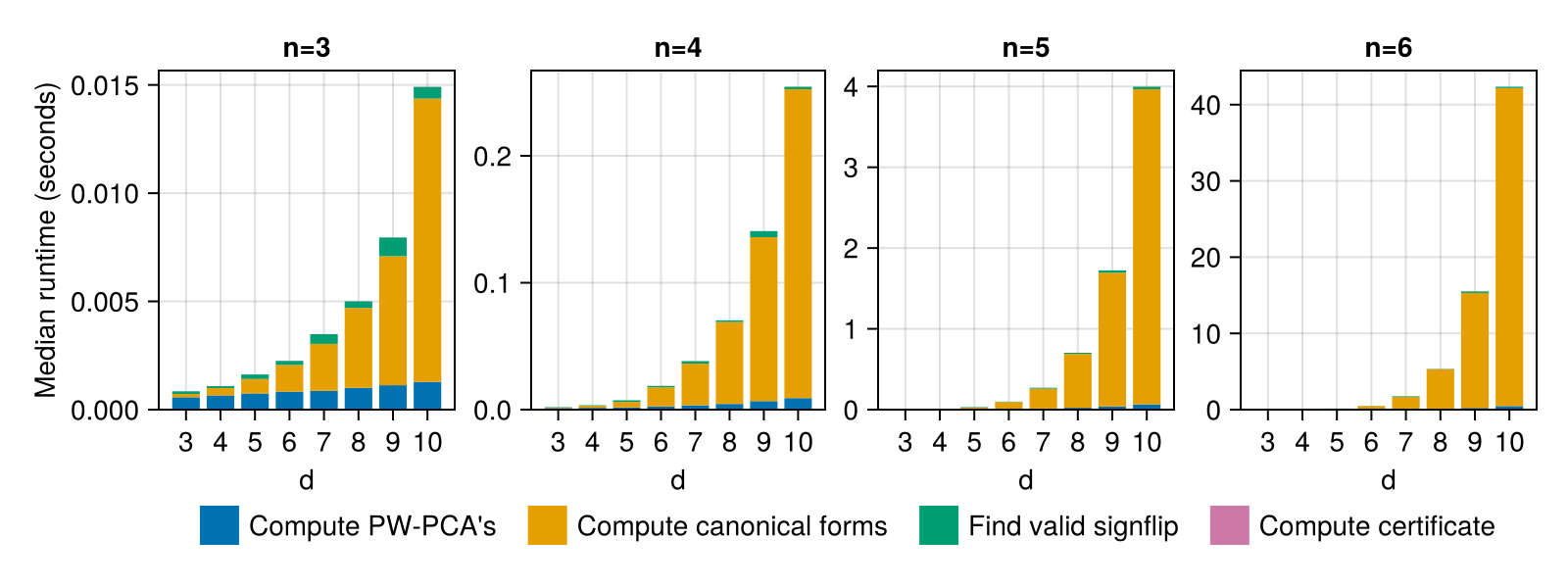}

\includegraphics[width=0.9\linewidth]{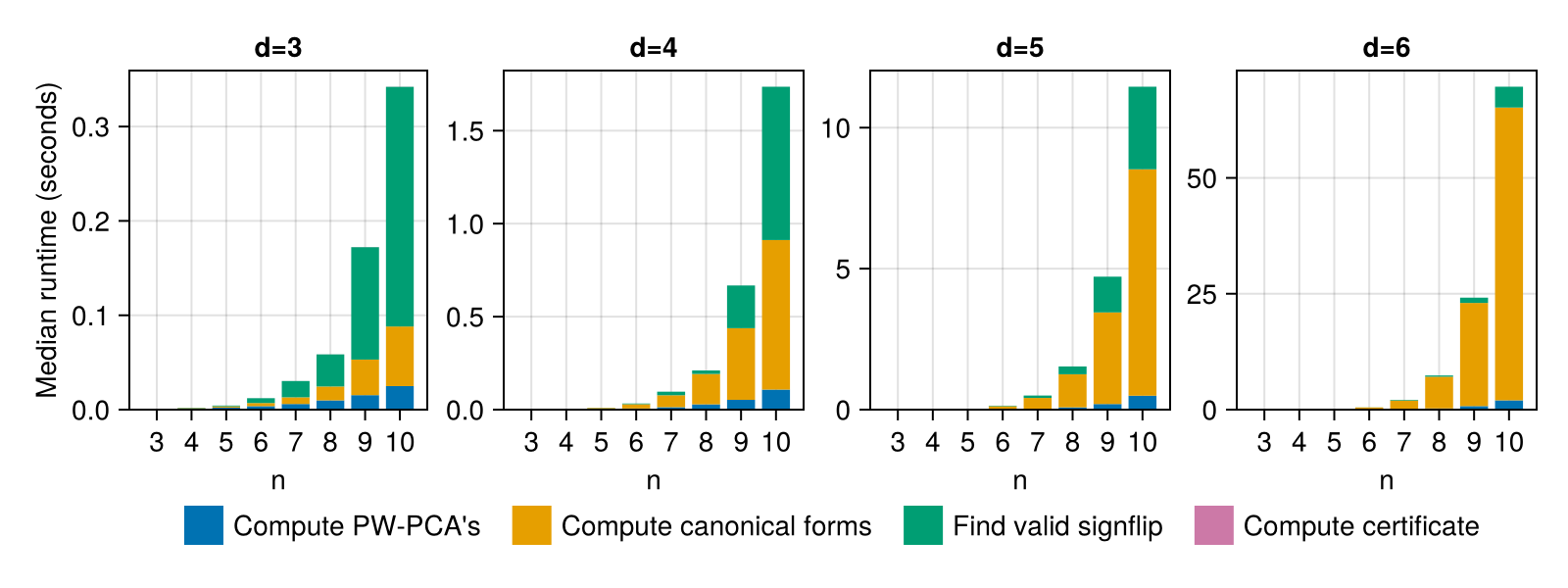}

\noindent We make the following observations:
\begin{itemize}
\item The runtime for PW-PCA is surprisingly negligible, even though the definition of PW-PCA involves global optimization and multivariate integration. This may be due to the effectiveness of \cref{thm:pwcov:formulas}.

\item From the bottom row of graphs, we see that for fixed $d$, finding a valid signflip becomes increasingly dominant as $n$ grows. This may be because the current implementation searches for a valid signflip by using a naive loop over the exponentially large space of $2^n$ possible signflips $\sigma$. It only exits this loop once it finds a signflip $\sigma$ that produces an error between $\sigma \bullet \hat{f}$ and $\hat{g}$ that is within the tolerance. Exploiting structure in $\hat{f}$ and $\hat{g}$ to accelerate this step is a promising direction for future work.

\item From both rows of graphs, we see that for fixed $n$, computing the canonical forms becomes increasingly dominant as $d$ grows. This may be because the current implementation naively expands $f(V_f x)$ and $g(V_g x)$, which can become expensive for large degrees.

Notably, each of these expansions corresponds to a ``symmetric tensor times matrix'' computation involving the corresponding symmetric coefficient tensor and transformation matrix. These types of operations have been heavily studied in the tensor community, e.g.,
in \cite{schatz2014est,daas2025mcp}. It may be possible to accelerate the operation here using similar techniques.

Note also that the canonical forms are only used to check, for given $\sigma$, if $\sigma \bullet V_f \bullet f = V_g \bullet g$, i.e., if $h(x) = \sigma \bullet V_f \bullet f - V_g \bullet g$ is identically zero with respect to $x$. Thus, another promising direction is to avoid the cost of expanding these polynomials alltogether either by random point testing using the Schwartz-Zippel Lemma or by ``generic'' points testing, that is, evaluating $h(x)$ on carefully chosen $x$ values based on some structural properties of $h$.
\end{itemize}
Overall, these plots reveal that obtaining further scalability will indeed likely require first improving the speed of computing the canonical forms then improving the speed of finding a valid signflip.
These are both promising directions for future work.

\paragraph{Acknowledgments:} 
We thank Alexander Demin for helpful suggestions on the use of Julia packages for implementing the proposed algorithm.
We also thank Irina Kogan for helpful discussion on the current state of the art in computational invariant theory.

Martin Helmer is  supported by the Royal Society under grant RSWF\textbackslash R2\textbackslash 242006 and by the United States Air Force Office of Scientific Research (AFOSR) under award
number FA9550-22-1-0462, managed by Dr.~Frederick Leve, and would like to gratefully acknowledge this support.
David Hong's work was partially supported by the US National Science Foundation
Mathematical Sciences Postdoctoral Research Fellowship DMS 2103353.
Hoon Hong's work was partially supported by the US National Science Foundation
CCF 2212461 and CCF 2331401.

\bibliographystyle{abbrv}
\bibliography{refs}
\begin{appendices}
\section{Proof of \cref{prp:genericity}}
\label{appendix:A}

We will prove the genericity of
Assumption~\ref{cond:simple:eigvals}. Let $n\geq2$ and $d\geq2$ be arbitrary
but fixed. The set of all polynomials in $x_{1},\ldots,x_{n}$ of degree at
most~$d$ forms a $\mathbb{R}$-vector space, which is in one-to-one
correspondence with the set of all possible coefficients, that is,
$\mathbb{R}^{m}\ $where $m$ is the number of all monomials in $x_{1}%
,\ldots,x_{n}$ of degree $d$. Let $a$ stand for the vector of coefficients of
$f$ and let
\[
E=\left\{  a\in\mathbb{R}^{m}:C_{f}\ \ \text{has a multiple eigenvalue}%
\right\}  .
\]
It is sufficient to show that $E$ is contained in a Zariski closed set. For
this, it suffices to find $h\in\mathbb{R}\left[  a\right]  \backslash\left\{
0\right\}  $ such that $E\subseteq\left\{  a\in\mathbb{R}^{m}:h\left(
a\right)  =0\right\}  $. We propose%
\[
h=\operatorname*{discriminant}\nolimits_{\lambda}\left(  p\right)
\ \in\mathbb{R}\left[  a\right]  \ \ \ \text{where\ \ \ }p=\left\vert \lambda
I-C_{f}\right\vert \in\mathbb{R}\left[  a\right]  \left[  \lambda\right]  .
\]
From the elementary theory of linear algebra and discriminants, we have
$h=\prod\limits_{1\leq i<j\leq n}\left(  \lambda_{i}-\lambda_{j}\right)  ^{2}$
where $\lambda_{i}$'s are the eigenvalues of $C_{f}$. Thus it is immediate
that $E\subseteq\left\{  a\in\mathbb{R}^{m}:h\left(  a\right)  =0\right\}  $.
Now it only remains to show that $h\neq0$ (not identically vanishing). For
this, it suffices to find $a^{\ast}\in\mathbb{R}^{m}$, equivalently $f^{\ast
},$ such that $C_{f^{\ast}}$ has distinct eigenvalues.

After numerous trial-errors, we propose $a^{\ast}$ corresponding to the
following particular polynomial $f^{\ast}$:
\[
f^{\ast}=\sum_{1\leq s\leq n}g_{s}x_{1}^{d-2}x_{s}^{2}\ \ \ \ \text{where
}g_{s}=n+1-s
\]
Note that every exponent in $\left(  \overline{f^{\ast}}\right)  ^{2}$ is
even. Thus, from Theorem~\ref{thm:pwcov:formulas}, we see immediately that $C_{f^{\ast}}$ is a
diagonal matrix. Hence the eigenvalues of $C_{f^{\ast}}$ are its diagonal
elements. Thus it suffices to show that the diagonal elements of $C_{f^{\ast}%
}$ are distinct. In fact we will show that $C_{f^{\ast},11}>C_{f^{\ast}%
,22}>\cdots>C_{f^{\ast},nn}$.

Applying Theorem~\ref{thm:pwcov:formulas} on $\overline{f^{\ast}}$ and carrying out elementary but
tedious calculations and simplifications, one can obtain the following
expression for the diagonal entries of $C_{f^{\ast}}$:%
\[
\frac{C_{f^{\ast},ii}}{W}=\left\{
\begin{array}
[c]{lll}%
\left(  2d+1\right)  \left(  2d-1\right)  \left(  2d-3\right)  g_{1}%
^{2}\ +\ 2\left(  2d-1\right)  \left(  2d-3\right)  g_{1}u\ +\ 2\left(
2d-3\right)  v+\ \left(  2d-3\right)  u^{2} & \text{if} & i=1\\
\left(  2d-1\right)  \left(  2d-3\right)  g_{1}^{2}\ +\ 4\left(  2d-3\right)
g_{1}g_{i}\ +\ 8g_{i}^{2}\ +\ 2\left(  2d-3\right)  g_{1}u\ +\ 4g_{i}%
u+\ 2v\ +\ u^{2} & \text{if} & i\geq2
\end{array}
\right.
\]
where%
\[
W=\frac{\pi^{\left(  n+1\right)  /2}(2d-5)!!}{2^{d}\Gamma(\frac{n+1}{2}%
+d+1)},\ \ \ \ \text{\ }u=g_{2}+\cdots+g_{n},\ \ \ \ \ \text{and\ \ }%
v=g_{2}^{2}+\cdots+g_{n}^{2}.
\]
One can immediately see that $C_{f^{\ast},22}>C_{f^{\ast},33}>\cdots
>C_{f^{\ast},nn}$ by inspecting the above expression of $C_{f^{\ast},ii}$ for
$i\geq2$ and recalling $g_{2}>g_{3}>\cdots>g_{n}$. Thus it remans to show
$C_{f^{\ast},11}>C_{f^{\ast},22}$. For this, note%
\begin{align*}
&  \ \left(  C_{f^{\ast},11}-C_{f^{\ast},22}\right)  /W\\
= &  \ \underset{}{\underset{>0}{\underbrace{2d\left(  2d-1\right)  \left(
2d-3\right)  }}}g_{1}^{2}\ \;\;\;+\underset{>0}{\underbrace{2\left(
2d-2\right)  \left(  2d-3\right)  }}g_{1}u+\underset{\geq
0}{\underbrace{2\left(  2d-4\right)  }}v\ \;+\left(  2d-4\right)
u^{2}-4\left(  2d-3\right)  g_{1}g_{2}-8g_{2}^{2}-4g_{2}u\\
> &  \ 2d\left(  2d-1\right)  \left(  2d-3\right)
\textcolor{red}{g_{1}g_{2}}+2\left(  2d-2\right)  \left(  2d-3\right)
\textcolor{red}{g_{2}u}+2\left(  2d-4\right)
\textcolor{red}{g_{2}^{2}}+\left(  2d-4\right)  u^{2}-4\left(  2d-3\right)
g_{1}g_{2}-8g_{2}^{2}-4g_{2}u\\
&  \ \text{since }g_{1}^{2}>g_{1}g_{2},\ \ g_{1}u>g_{2}u,\ \ \text{and }v\geq
g_{2}^{2}\\
= &  \ \underset{>0}{\underbrace{\left(  2d\left(  2d-1\right)  \left(
2d-3\right)  -4\left(  2d-3\right)  \right)  }}g_{1}g_{2}+\underset{\geq
0}{\underbrace{\left(  2\left(  2d-2\right)  \left(  2d-3\right)  -4\right)
}}g_{2}u+\underset{}{\underset{\geq0}{\underbrace{\left(  2d-4\right)  }}%
}u^{2}+\left(  2\left(  2d-4\right)  -8\right)  g_{2}^{2}\\
> &  \ \left(  2d\left(  2d-1\right)  \left(  2d-3\right)  -4\left(
2d-3\right)  \right)  \textcolor{red}{g_{2}^{2}}\ \;+\left(  2\left(  2d-2\right)
\left(  2d-3\right)  -4\right)  \textcolor{red}{g_{2}^{2}}\;+\left(
2d-4\right)  \textcolor{red}{g_{2}^{2}}+\left(  2\left(  2d-4\right)
-8\right)  g_{2}^{2}\\
&  \ \text{since }g_{1}g_{2}>g_{2}^{2},\ \ g_{2}u\geq g_{2}^{2},\ \ \text{and
}u^{2}\geq g_{2}^{2}\\
= &  \ \underset{}{8\left(  d+1\right)  d\left(  d-2\right)  }g_{2}%
^{2}\ \ \ \ \ \text{by factoring}\\
\geq &  \ 0
\end{align*}
\noindent Thus $C_{f^{\ast},11}>C_{f^{\ast},22}$. Finally we have proved the genericity
of Assumption~\ref{cond:simple:eigvals}.
\qed

\end{appendices}
\end{document}